\numberwithin{equation}{section}
\newtheorem{theorem}{Theorem}[section]
\newtheorem{proposition}[theorem]{Proposition}
\newtheorem{corollary}[theorem]{Corollary}
\newtheorem{lemma}[theorem]{Lemma}
\theoremstyle{definition}
\newtheorem*{acknowledgements}{Acknowledgements}
\theoremstyle{remark}
\newtheorem{remark}[theorem]{Remark}
\newtheorem{example}[theorem]{Example}
\DeclareMathOperator{\sech}{sech}
\DeclareMathOperator{\cn}{cn}
\DeclareMathOperator{\sn}{sn}
\newcommand{\R}{\mathbf{R}}
\renewcommand{\S}{\mathbf{S}}
\newcommand{\X}{\mathbf{X}}
\newcommand{\A}{\mathcal{A}}
\newcommand{\bA}{\mathbf{A}}
\newcommand{\B}{\mathcal{B}}
\renewcommand{\L}{\mathcal{L}}
\newcommand{\E}{\mathcal{E}}
\begin{document}

\title{Smooth compactness of elasticae}
\author[T.~Miura]{Tatsuya Miura}
\address[T.~Miura]{Department of Mathematics, Graduate School of Science, Kyoto University, Kitashirakawa Oiwake-cho, Sakyo-ku, Kyoto 606-8502, Japan}
\email{tatsuya.miura@math.kyoto-u.ac.jp}
\keywords{Elastica, compactness, boundary value problem, stability.}
\subjclass[2020]{49Q10 and 53A04}
\date{\today}

\begin{abstract}
We prove a smooth compactness theorem for the space of elasticae, unless the limit curve is a straight segment. As an application, we obtain smooth stability results for minimizers with respect to clamped boundary data.
\end{abstract}

\maketitle


\section{Introduction}

Elastica theory is fairly classical, dating back to early modern times \cite{Lev,Tru83}, but some of its fundamental properties are still missing in the literature.

In this paper we address a compactness problem for the space of elasticae under natural boundedness assumptions.
This problem, despite its fundamental nature, proves to be somewhat subtle.
Our main result shows that smooth compactness holds true generically, but may fail in the exceptional case that the limit curve is a straight segment.
As an application, we clarify necessary assumptions so that smooth stability results hold for minimizers of clamped boundary value problems.
This result is directly pertinent to describing physical stability of elastic rods or surfaces with respect to boundary data.

\subsection{Smooth compactness}

Let $n\geq2$ and $I=(0,1)$.
An immersed curve $\gamma\in W^{2,2}(I;\R^n)$ is called an \emph{elastica} if it is a critical point of the \emph{bending energy}
\[
\B[\gamma]:=\int_I|\kappa|^2ds
\]
among curves with fixed length $\L[\gamma]:=\int_I ds$.
Here $ds$ denotes the arclength measure $ds:=|\partial_x\gamma|dx$, and $\kappa:=\gamma_{ss}$ denotes the curvature vector, where the arclength derivative is defined by $\partial_s:=|\partial_x\gamma|^{-1}\partial_x$.
Recall that the curve $\gamma$ is an elastica if and only if there is a multiplier $\lambda\in\R$ such that $\nabla\B[\gamma]+\lambda\nabla\L[\gamma]=0$.
By standard regularity arguments (see e.g.\ \cite{Miura2024survey}) any elastica is analytic (up to constant-speed reparametrization) and solves the Euler--Lagrange equation in the classical sense:
\begin{equation}\label{eq:EL_elastica}
    2\nabla_s^2\kappa + |\kappa|^2\kappa - \lambda \kappa=0,
\end{equation}
where $\nabla_s$ denotes the normal derivative $\nabla_sX:=X_s^\perp=X_s-\langle X_s,\gamma_s\rangle\gamma_s$.

Here is our main result on compactness of elasticae.

\begin{theorem}\label{thm:main_compactness}
    Let $\{\gamma_j\}_{j=1}^\infty\subset W^{2,2}(I;\R^n)$ be a sequence of elasticae such that
    \begin{equation}\label{eq:main_assumption}\tag{A}
        \text{there exists $C>0$ such that $\B[\gamma_j]\leq C$ and $\tfrac{1}{C}\leq \L[\gamma_j] \leq C$ for all $j$.}
    \end{equation}
    Let $\bar{\gamma}_j\in C^\omega(\bar{I};\R^n)$ denote the constant-speed reparametrization of $\gamma_j$.
    Then there are translation vectors $b_j\in\R^n$ such that the sequence $\{\bar{\gamma}_j+b_j\}$ contains a subsequence $\{\bar{\gamma}_{j'}+b_{j'}\}$ converging to some constant-speed elastica $\bar{\gamma}_\infty\in C^\omega(\bar{I};\R^n)$ in the $W^{2,2}$-weak topology and the $C^1$ topology.

    In addition, if $\bar{\gamma}_\infty$ is not a straight segment, then the convergence holds in the smooth sense, that is, $\lim_{j'\to\infty}\|\bar{\gamma}_{j'}+b_{j'}-\bar{\gamma}_\infty\|_{C^m(\bar{I};\R^n)}=0$ for any $m\geq0$.
\end{theorem}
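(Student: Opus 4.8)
\section*{Proof proposal}

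The plan is to turn assumption \eqref{eq:main_assumption} into a uniform $W^{2,2}$ bound via the constant-speed parametrization, extract the weak/$C^1$ limit, and then identify the limit and upgrade the convergence by tracking two scalar invariants of each elastica. Writing $L_j:=\L[\gamma_j]$ and using $|\partial_x\bar\gamma_j|\equiv L_j$, constant speed forces $\partial_x^2\bar\gamma_j=L_j^2\kappa_j$ (this is normal, since $\langle\partial_x\bar\gamma_j,\partial_x^2\bar\gamma_j\rangle=0$), so $\int_0^1|\partial_x^2\bar\gamma_j|^2dx=L_j^3\B[\gamma_j]\le C^4$. Together with $|\partial_x\bar\gamma_j|=L_j\le C$, after choosing $b_j$ with $\bar\gamma_j(0)+b_j=0$ the shifted curves are bounded in $W^{2,2}(I;\R^n)$. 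Reflexivity gives a weakly $W^{2,2}$-convergent subsequence, and the compact embedding $W^{2,2}(\bar I)\hookrightarrow C^1(\bar I)$ upgrades this to $C^1$-convergence to a limit $\bar\gamma_\infty$. Since $L_j\ge 1/C$, passing to a further subsequence we may assume $L_j\to L_\infty\ge 1/C$, so $|\partial_x\bar\gamma_\infty|\equiv L_\infty>0$ and $\bar\gamma_\infty$ is a genuine constant-speed immersion. This yields the weak part of the statement; it remains to identify $\bar\gamma_\infty$ as an elastica and, in the non-degenerate case, to promote the convergence to $C^m$ for every $m$.

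The missing ingredient is uniform higher-order control, and the natural parameters to track are the multipliers $\lambda_j$ in \eqref{eq:EL_elastica} together with the first integral obtained by testing \eqref{eq:EL_elastica} against $\nabla_s\kappa_j$, namely the constant $E_j:=|\nabla_s\kappa_j|^2+\tfrac14|\kappa_j|^4-\tfrac{\lambda_j}{2}|\kappa_j|^2$ along each curve. The key observation is that these two scalars control the curvature pointwise: dropping $|\nabla_s\kappa_j|^2\ge0$ gives $\tfrac14|\kappa_j|^4\le E_j+\tfrac{\lambda_j}{2}|\kappa_j|^2$, hence a bound $\|\kappa_j\|_{C^0}\le\Phi(\lambda_j,E_j)$; the first integral then also bounds $\|\nabla_s\kappa_j\|_{C^0}$, and \eqref{eq:EL_elastica}, which reads $\nabla_s^2\kappa_j=\tfrac12(\lambda_j-|\kappa_j|^2)\kappa_j$, propagates these bounds to all arclength derivatives by repeated differentiation. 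I would then prove the dichotomy: if $\{(\lambda_j,E_j)\}$ is bounded, a subsequence has $\lambda_{j'}\to\lambda_\infty$, the curves are bounded in $C^m$ for every $m$, and Arzel\`a--Ascoli together with the $C^1$-limit already found yields $C^m$-convergence; passing to the limit in \eqref{eq:EL_elastica} shows $\bar\gamma_\infty$ is an elastica with multiplier $\lambda_\infty$.

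It remains to treat the case where $(\lambda_j,E_j)$ is unbounded and to show it forces $\bar\gamma_\infty$ to be a straight segment; this is the heart of the matter and the step I expect to be hardest. The mechanism is that, under $\B[\gamma_j]\le C$, blow-up of the multiplier corresponds to curvature oscillating with higher and higher frequency, so that the unit tangent $\partial_x\bar\gamma_j/L_j$---whose $x$-derivative is $L_j\kappa_j$---oscillates with vanishing amplitude and averages to a constant in the $C^1$-limit, i.e.\ $\kappa_\infty\equiv0$. To make this rigorous I would test the distributional form of \eqref{eq:EL_elastica}, whose highest-order part in the fixed variable $x$ is $2L_j^{-4}\partial_x^4\bar\gamma_j-\lambda_jL_j^{-2}\partial_x^2\bar\gamma_j$ with remainder the cubic nonlinearity $N_j\sim L_j^{-6}|\partial_x^2\bar\gamma_j|^2\partial_x^2\bar\gamma_j$, against a fixed $\varphi\in C_c^\infty(I;\R^n)$ and divide by $\lambda_j$: the biharmonic term is $O(1/\lambda_j)$ after integrating by parts twice onto $\varphi$, so provided the nonlinear contribution is $o(\lambda_j)$ one gets $\int_I\partial_x^2\bar\gamma_\infty\cdot\varphi=0$ for all $\varphi$, whence $\bar\gamma_\infty$ is straight. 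The main obstacle is precisely the control of $N_j$, since the $L^2$-bound on $\partial_x^2\bar\gamma_j$ does not by itself bound $\int|\kappa_j|^3$; here one must exploit the pointwise bound $\|\kappa_j\|_{C^0}\le\Phi(\lambda_j,E_j)$ together with the rate at which the high-curvature region shrinks, or else argue by a blow-up at the scale of maximal curvature and classify the limiting profiles, ruling out any that deposit curvature in the $C^1$-limit. Combining the two cases, the limit is always an elastica, and whenever it is not a straight segment the bounded case must occur, giving the asserted smooth convergence.
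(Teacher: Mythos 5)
Your opening step matches the paper's (a uniform $W^{2,2}$ bound from constant speed, then weak and $C^1$ compactness), and your bounded-case bootstrap via the first integral $E_j=|\nabla_s\kappa_j|^2+\tfrac14|\kappa_j|^4-\tfrac{\lambda_j}{2}|\kappa_j|^2$ (which is indeed constant along each elastica) is a legitimate, more self-contained substitute for the interpolation estimates of Dziuk--Kuwert--Sch\"atzle type that the paper imports from DallAcqua--Pozzi. But the proposal has a genuine gap in its case structure: your dichotomy is on boundedness of the \emph{pair} $(\lambda_j,E_j)$, while your segment mechanism only exploits $|\lambda_j|\to\infty$ (you divide the tested equation by $\lambda_j$). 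The mixed case $\sup_j|\lambda_j|<\infty$ with $E_j\to\infty$ is covered by neither half: your pointwise bound $\|\kappa_j\|_{C^0}\le\Phi(\lambda_j,E_j)$ degenerates, and dividing by a bounded $\lambda_j$ yields nothing. To close this you must prove the mixed case vacuous, i.e.\ that \eqref{eq:main_assumption} plus bounded multipliers force bounded $E_j$; that is true, but it is precisely the content of the interpolation machinery you bypassed --- which is why the paper's dichotomy is on $\lambda_j$ alone, with bounded $\lambda_j$ giving $\sup_j\|\nabla_s^m\kappa_j\|_{L^2}<\infty$ for all $m$ and hence in particular a bound on $E_j$.

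The second gap is the one you flag yourself: in the blow-up case the control of the cubic term ($\int|\kappa_j|^3=o(\lambda_j)$) is left open, with only two speculative strategies sketched, so the argument halts at what you correctly call the heart of the matter. The paper avoids this route entirely: it uses the Langer--Singer elliptic-function parametrization and the constant Killing field $J_j$ (whose squared norm is exactly $4E_j+\lambda_j^2$ in your notation) to show that the cylindrical radius $r_j\le 2k_j/a_j$ tends to $0$ in $L^2$, forcing the limit onto an axis. It is worth noting, however, that your testing idea does close if you use the conservation-law form of \eqref{eq:EL_elastica}: the equation is equivalent to constancy of $V_j:=2\partial_s^3\gamma_j+3|\partial_s^2\gamma_j|^2\partial_s\gamma_j-\lambda_j\partial_s\gamma_j$, and in the identity $\int_I\langle V_j,\partial_x\varphi\rangle\,dx=0$ for fixed $\varphi\in C_c^\infty(I;\R^n)$, the third-order term goes onto $\varphi$ by a single integration by parts and is controlled by the uniform $W^{2,2}$ bound, while the cubic term is $3|\kappa_j|^2\partial_s\gamma_j$ with $|\partial_s\gamma_j|\equiv1$, hence bounded in $L^1(ds)$ by $3\B[\gamma_j]\le 3C$ --- no $\int|\kappa_j|^3$ ever appears. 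Dividing by $\lambda_j$ along a subsequence with $|\lambda_j|\to\infty$ and using the $C^1$ convergence gives $\int_I\langle\partial_x\bar\gamma_\infty,\partial_x\varphi\rangle\,dx=0$ for all $\varphi$, i.e.\ $\bar\gamma_\infty$ is a segment. With that replacement your outline would become a complete and in fact shorter proof of the unbounded-multiplier step than the paper's elliptic-function argument; but the first gap (the mixed case) would still need to be repaired, most naturally by reverting to a dichotomy on $\lambda_j$ alone and invoking the interpolation estimates in the bounded case.
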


\begin{remark}
    Without the additional assumption for smooth convergence, even $W^{2,2}$-strong convergence may fail.
    We will give planar counterexamples of both curvature oscillation type in Example \ref{ex:oscillation} (see Figure \ref{fig:curvature_oscillation}) and of concentration type in Example \ref{ex:concentration} (see Figure \ref{fig:curvature_concentration}).
    In particular, those examples imply that adding an assumption of the form $\inf_j\B[\gamma_j]\geq\frac{1}{C}$ cannot prevent nonsmooth convergence.
\end{remark}

\begin{figure}[htbp]
 \begin{center}
     \includegraphics[scale=0.4]{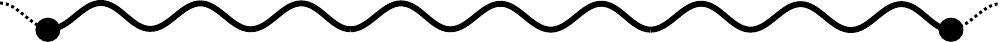}
     \caption{Counterexample of curvature oscillation type.}
     \label{fig:curvature_oscillation}
 \end{center}
\end{figure}

\begin{figure}[htbp]
 \begin{center}
     \includegraphics[scale=0.4]{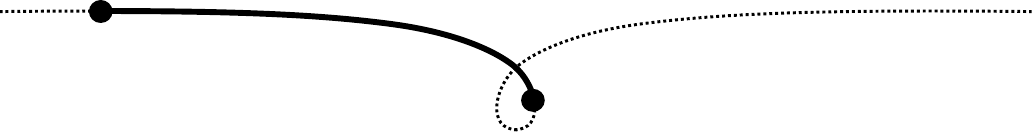}
     \caption{Counterexample of curvature concentration type.}
     \label{fig:curvature_concentration}
 \end{center}
\end{figure}

\begin{remark}\label{rem:invariance}
    If the original sequence $\{\gamma_j\}$ is also bounded in $L^\infty$, or more generally $\sup_j\min_{x\in\bar{I}}|\gamma_j(x)|<\infty$, then there is no need to take translation vectors $\{b_j\}$, since in this case $\{b_j\}$ must be bounded and thus have a convergent subsequence.
\end{remark}

We briefly discuss the idea of the proof of Theorem \ref{thm:main_compactness}.
The first assertion is a simple consequence of standard weak compactness.
The main point is when this weak convergence improves smooth convergence.
Here we focus on the behavior of the corresponding sequence of multipliers $\{\lambda_j\}$.
In fact, we will prove the following dichotomy:
\begin{itemize}
    \item If $\sup_j|\lambda_j|<\infty$, then the convergence is smooth (Proposition \ref{prop:bounded_lambda}).
    \item If $\sup_j|\lambda_j|=\infty$, then the limit is a segment (Proposition \ref{prop:unbounded_lambda}).
\end{itemize}
The first part is now reduced to standard interpolation arguments, while the latter involves a singular limit and requires more delicate arguments concerning the moduli space of elasticae.

A major benefit of Theorem \ref{thm:main_compactness} is that, under \eqref{eq:main_assumption}, smooth convergence follows by only checking a purely geometric property, without any information on the multipliers.
This point is particularly important when studying fixed-length problems, in which multipliers are not a priori controlled.

\subsection{Smooth stability for boundary value problems}

Theorem \ref{thm:main_compactness} can be used to show fundamental stability results for elasticae with respect to perturbations of parameters in various boundary value problems.
Here we focus on energy-minimal elasticae subject to the clamped boundary condition.

Let $\X:=\R^n\times\R^n\times\S^{n-1}\times\S^{n-1}$, where $\S^{n-1}\subset\R^n$ denotes the unit sphere.
Given parameters $\Gamma=(P_0,P_1,V_0,V_1)\in\X$ and $L>0$, we define the admissible set (of constant-speed curves) by
\begin{equation*}
    \mathcal{A}_{\Gamma,L} := \left\{ \gamma\in W^{2,2}(I;\R^n) \,\left|\, 
    \begin{aligned}
        &\gamma(0)=P_0,\ \gamma(1)=P_1,\\
        &\gamma_s(0)=V_0,\ \gamma_s(1)=V_1,\ |\partial_x\gamma|\equiv L\,
    \end{aligned}
    \right\}\right..
\end{equation*}
Let
\begin{equation*}
    \hat{\bA} := \{ (\Gamma,L)\in \X\times(0,\infty) \mid \A_{\Gamma,L}\neq\emptyset \}.
\end{equation*}
In view of Theorem \ref{thm:main_compactness}, we distinguish the parameters depending on whether straight segments are admissible.
It is geometrically clear that
\[
\hat{\bA}=\hat{\bA}'\cup\hat{\bA}_s,
\]
where
\begin{align*}
    \hat{\bA}' &:= \{ (\Gamma,L)\in\hat{\bA}\mid |P_1-P_0|<L\},\\
    \hat{\bA}_s &:= \{ (\Gamma,L)\in\hat{\bA}\mid |P_1-P_0|=L,\ V_0=V_1=\tfrac{P_1-P_0}{L}\}.
\end{align*}
These sets are distinguished by whether a straight segment is admissible; of course, the former set $\hat{\bA}'$ is much more important as it contains generic parameters.
Recall that for each $(\Gamma,L)\in \hat{\bA}$ there always exists a minimizer of the bending energy $\B$ in $\mathcal{A}_{\Gamma,L}$ (\cite{Miura2024survey}*{Theorem 3.1}), and such a minimizer must be a smooth elastica (\cite{Miura2024survey}*{Theorem 3.6}).
However, minimizers may not be unique for given boundary data (see \cite{Miura20}*{Figure 5}).

Here is the main smooth stability result for globally minimal elasticae.

\begin{theorem}\label{thm:stability_fixed}
    Let $(\Gamma,L)\in \hat{\bA}'$.
    Then, for any sequence $\{(\Gamma_j,L_j)\}_{j=1}^\infty\subset \hat{\bA}$ converging to $(\Gamma,L)$, and for any sequence $\{\gamma_j\}$ of minimizers $\gamma_j$ of $\B$ in $\mathcal{A}_{\Gamma_j,L_j}$, there exist a subsequence $\{\gamma_{j'}\}\subset \{\gamma_j\}$ and a minimizer $\gamma$ of $\B$ in $\mathcal{A}_{\Gamma,L}$ such that
    \[
    \lim_{{j'}\to\infty}\|\gamma_{j'}-\gamma\|_{C^m(\bar{I};\R^n)}=0 \quad \text{for all $m\geq0$.}
    \]
\end{theorem}

\begin{remark}
    Taking a subsequence is necessary due to the possible non-uniqueness of minimizers for $(\Gamma,L)$.
    Clearly, if we additionally assume that $\B$ has a unique minimizer in $\A_{\Gamma,L}$, then the smooth convergence holds without taking a subsequence.
\end{remark}

It is a major open problem in elastica theory to determine the general structure of the subset of admissible parameters $\hat{\bA}$ for which uniqueness of minimizers holds, even in the planar case $n=2$.
Some observations on uniqueness and minimality are given, e.g., in \cite{SS14,Miura20,MY_2024_Crelle,DallAcquaDeckelnick2024,miura2024uniqueness}.
In particular, in \cite{Miura20,miura2024uniqueness} it is shown that if $n=2$ and if $\Gamma=(P_0,P_1,V_0,V_1)$ satisfies the ``generic boundary angle'' assumption
\begin{equation}\label{eq:generic_angle}
    \max_{i=0,1}|\langle P_1-P_0, V_i \rangle|< |P_1-P_0|,
\end{equation}
then there is $\varepsilon_\Gamma>0$ such that for any $L>0$ with $|P_0-P_1|<L\leq |P_0-P_1|+\varepsilon_\Gamma$, the energy $\B$ admits a unique minimizer in $\A_{\Gamma,L}$.

Now we observe that our smooth stability result can even be used to improve the understanding of the structure of the ``uniqueness sets'' of boundary data.
In the author and Wheeler's recent work \cite{miura2024uniqueness}, it is shown that if a planar elastica has non-constant monotone signed curvature $k$, then it is uniquely minimal subject to the clamped boundary condition.
This monotonicity assumption may not be preserved under smooth perturbations, but the stronger property that the curvature derivative $\partial_sk$ is strictly positive or negative is always preserved (under $C^3$ perturbations).
This fact, together with Theorem \ref{thm:stability_fixed}, directly implies a nontrivial uniqueness-propagation property for certain parameters.

\begin{corollary}
    Let $n=2$ and $(\Gamma,L)\in\hat{\bA}$.
    Suppose that $\A_{\Gamma,L}$ admits an elastica whose curvature derivative has no zero.
    Then there is a neighborhood $\mathbf{U}\subset\hat{\bA}$ of $(\Gamma,L)$ such that for any $(\Gamma',L')\in\mathbf{U}$ the energy $\B$ has a unique minimizer in $\mathcal{A}_{\Gamma',L'}$.
\end{corollary}

\begin{remark}
    The same assertion also holds if we instead suppose that $\A_{\Gamma,L}$ admits a \emph{uniquely minimal} elastica whose curvature has no zero.
    This can be shown by employing the convexification argument in \cite[Section 5.3]{Miura20}.
\end{remark}

\begin{remark}
    The uniqueness-propagation property is meaningful even phenomenologically.
    Indeed, it ensures that, for example, if the minimizer for $(\Gamma,L)$ is symmetric, any perturbation of $(\Gamma,L)$ does not lead to symmetry-breaking type non-uniqueness of minimizers as in \cite[Figure 5]{Miura20}.
\end{remark}

Finally, for comparison purposes, we also obtain a similar stability result to Theorem \ref{thm:stability_fixed} for the length-penalized problem.
More precisely, given $\Gamma\in\X$ and $\lambda>0$, we minimize the \emph{modified bending energy}
\[
\E_\lambda := \B + \lambda \L
\]
among curves in the admissible set
\[
\mathcal{A}_{\Gamma}:=\bigcup_{L>0}\mathcal{A}_{\Gamma,L}.
\]
Notice that $\mathcal{A}_{\Gamma}\neq\emptyset$ for every $\Gamma\in\X$.
Here, in addition to the case that straight segments are admissible, we also independently treat the case of closed curves.
Let
\begin{align*}
    \X' := \X\setminus(\X_s\cup\X_c), \quad \X_s &:= \{ P_0\neq P_1,\ V_0=V_1=\tfrac{P_1-P_0}{|P_1-P_0|}\},\\
    \X_c &:= \{ P_0=P_1,\ V_0=V_1\}.
\end{align*}
Note also that minimizers always exist (Theorem \ref{thm:existence_penalized}) but may not be unique.
(For $\lambda\leq0$, minimizers may not even exist.)

\begin{theorem}\label{thm:stability_penalized}
    Let $(\Gamma,\lambda)\in\X'\times(0,\infty)$.
    Then, for any sequence $\{(\Gamma_j,\lambda_j)\}_{j=1}^\infty\subset \X\times(0,\infty)$ converging to $(\Gamma,\lambda)$, and for any sequence $\{\gamma_j\}$ of minimizers $\gamma_j$ of $\E_{\lambda_j}$ in $\mathcal{A}_{\Gamma_j}$, there exist a subsequence $\{\gamma_{j'}\}\subset \{\gamma_j\}$ and a minimizer $\gamma$ of $\E_\lambda$ in $\mathcal{A}_\Gamma$ such that
    \[
    \lim_{j'\to\infty}\|\gamma_{j'}-\gamma\|_{C^m(\bar{I};\R^n)}=0 \quad \text{for all $m\geq0$.}
    \]
\end{theorem}

The proof has a slightly different flavor than the fixed-length case.
Here the multipliers are a priori controlled, but instead the length is not.
In particular, the assumption of Theorem \ref{thm:stability_penalized} rules out counterexamples as in Figure \ref{fig:discontinuous_circle} (see also Remark \ref{rem:discontinuous_length_penalized}).

\begin{figure}[htbp]
 \begin{center}
     \includegraphics[scale=0.3]{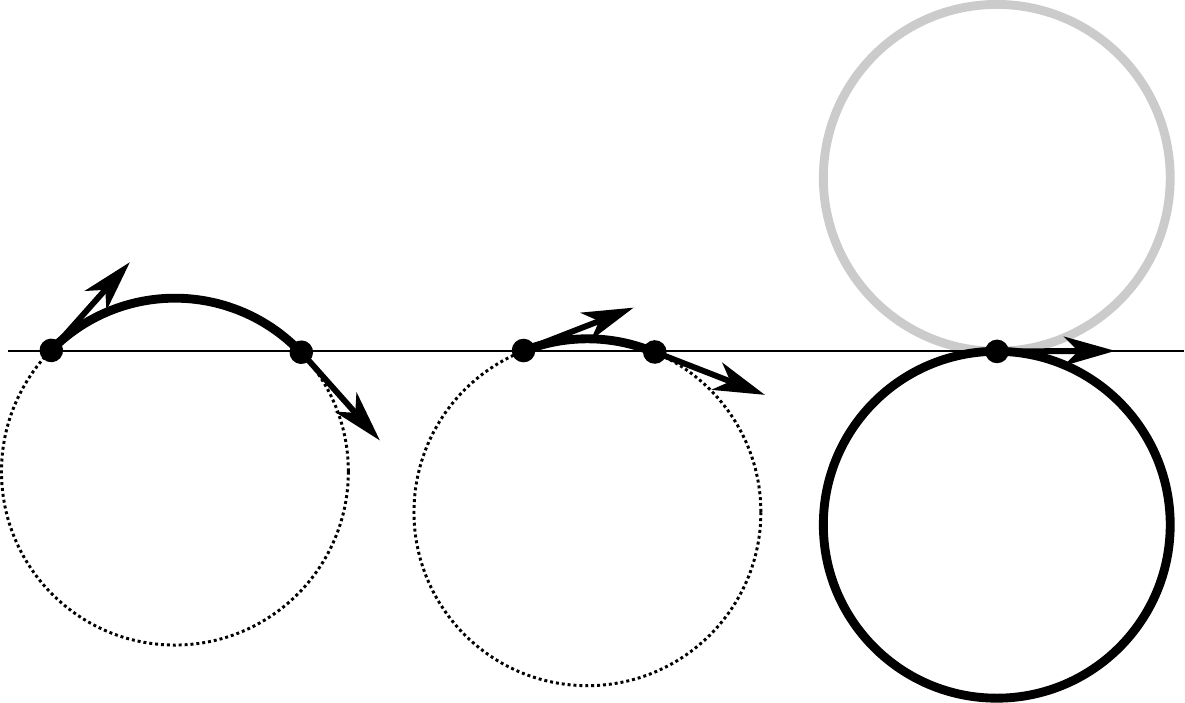}
     \caption{Discontinuous transition of minimizers of $\E_\lambda$.}
     \label{fig:discontinuous_circle}
 \end{center}
\end{figure}

This paper is organized as follows.
In Section \ref{sec:compactness} we prove Theorem \ref{thm:main_compactness} and discuss counterexamples.
In Section \ref{sec:stability} we prove Theorems \ref{thm:stability_fixed} and \ref{thm:stability_penalized}.

\begin{acknowledgements}
    This work grew out of discussions with Georg Dolzmann and Glen Wheeler, whom the author would like to thank.
    This work is supported by JSPS KAKENHI Grant Numbers JP21H00990, JP23H00085, and JP24K00532.
\end{acknowledgements}


\section{Compactness}\label{sec:compactness}

We begin with a standard weak compactness for general bounded sequences.

\begin{lemma}\label{lem:weak_compactness}
    Let $\{\gamma_j\}_{j=1}^\infty\subset W^{2,2}(I;\R^n)$ be a sequence of constant-speed curves satisfying \eqref{eq:main_assumption} and $\sup_{j}|\gamma_j(0)|<\infty$.
    Then $\{\bar{\gamma}_j\}$ contains a subsequence converging to some $\gamma_\infty\in W^{2,2}(I;\R^n)$ in the $W^{2,2}$-weak topology and the $C^1$ topology.
    In particular, $\gamma_\infty$ is a constant-speed curve of positive length.
\end{lemma}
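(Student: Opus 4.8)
The plan is to extract a convergent subsequence by invoking the Banach--Alaoglu theorem on a suitable bounded set in the reflexive space $W^{2,2}(I;\R^n)$, and then to upgrade weak convergence to $C^1$ convergence via compact embedding. First I would establish that the reparametrized sequence $\{\bar\gamma_j\}$ is bounded in $W^{2,2}(I;\R^n)$. The constant-speed condition gives $|\partial_x\bar\gamma_j|\equiv\L[\bar\gamma_j]=\L[\gamma_j]$, so the uniform length bound $\L[\gamma_j]\leq C$ from \eqref{eq:main_assumption} controls $\|\partial_x\bar\gamma_j\|_{L^\infty}$, and hence $\|\partial_x\bar\gamma_j\|_{L^2}$. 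For the second derivative, I would use the relation $\partial_x^2\bar\gamma_j=\L[\gamma_j]^2\,\partial_s^2\bar\gamma_j$ (since $\partial_x=\L[\gamma_j]\partial_s$ for constant-speed curves) together with the orthogonal decomposition $\partial_s^2\bar\gamma_j=\kappa_j+\langle\partial_s^2\bar\gamma_j,\partial_s\bar\gamma_j\rangle\partial_s\bar\gamma_j$; because $|\partial_s\bar\gamma_j|\equiv1$ the tangential part vanishes, so $|\partial_x^2\bar\gamma_j|=\L[\gamma_j]^2|\kappa_j|$, and the bending-energy bound $\B[\gamma_j]=\int_I|\kappa_j|^2ds\leq C$ (reparametrization-invariant) controls $\|\kappa_j\|_{L^2}$, giving the desired $L^2$ bound on $\partial_x^2\bar\gamma_j$. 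Finally, the pointwise bound $\sup_j|\bar\gamma_j(0)|=\sup_j|\gamma_j(0)|<\infty$ anchors the zeroth-order term, so $\{\bar\gamma_j\}$ is bounded in $W^{2,2}$.

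Once boundedness is in hand, the extraction is standard: since $W^{2,2}(I;\R^n)$ is a Hilbert space, a subsequence $\{\bar\gamma_{j'}\}$ converges weakly in $W^{2,2}$ to some limit $\gamma_\infty\in W^{2,2}(I;\R^n)$. The one-dimensional domain $I=(0,1)$ makes the embedding $W^{2,2}(I;\R^n)\hookrightarrow C^1(\bar I;\R^n)$ compact (by Rellich--Kondrachov, or directly via Arzel\`a--Ascoli applied to $\bar\gamma_{j'}$ and $\partial_x\bar\gamma_{j'}$, whose derivatives up to one more order are uniformly bounded in $L^2$ and hence equicontinuous through the fundamental theorem of calculus). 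Passing to a further subsequence, I obtain $\bar\gamma_{j'}\to\gamma_\infty$ in $C^1(\bar I;\R^n)$, with the weak and strong limits necessarily agreeing.

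It remains to verify the final sentence, that $\gamma_\infty$ is a constant-speed curve of positive length. The $C^1$ convergence gives $\partial_x\bar\gamma_{j'}\to\partial_x\gamma_\infty$ uniformly, hence $|\partial_x\bar\gamma_{j'}|\to|\partial_x\gamma_\infty|$ uniformly; since each $|\partial_x\bar\gamma_{j'}|\equiv\L[\gamma_{j'}]$ is constant, the limit $|\partial_x\gamma_\infty|$ is also a constant, equal to $\lim\L[\gamma_{j'}]$ along the subsequence. By the lower length bound $\tfrac1C\leq\L[\gamma_{j'}]$ in \eqref{eq:main_assumption}, this limiting constant is at least $\tfrac1C>0$, so $\gamma_\infty$ has constant speed and positive length; in particular it is an immersed curve.

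I do not anticipate a genuine obstacle here, as this lemma is explicitly described in the text as \emph{standard weak compactness}; the only point requiring a little care is keeping track of the reparametrization when estimating $\partial_x^2\bar\gamma_j$, since the $W^{2,2}$ bound must be phrased for the constant-speed representatives $\bar\gamma_j$ rather than the original $\gamma_j$, and it is precisely the scale-invariance of $\B$ together with the two-sided length control in \eqref{eq:main_assumption} that makes the second-derivative estimate uniform.
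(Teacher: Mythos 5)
Your proposal is correct and follows essentially the same route as the paper's proof: the identity $\int_I|\partial_x^2\bar\gamma_j|^2\,dx=\L[\gamma_j]^3\B[\gamma_j]$ (which you derive via $\partial_x^2\bar\gamma_j=\L[\gamma_j]^2\kappa_j$, the paper via direct reparametrization of $\B$) combined with the two-sided length bound and the anchor $\sup_j|\gamma_j(0)|<\infty$ gives uniform $W^{2,2}$ boundedness, after which weak compactness and the compact embedding $W^{2,2}(I;\R^n)\subset\subset C^1(\bar I;\R^n)$ yield the convergences, and uniform convergence of the constant speeds $|\partial_x\bar\gamma_j|\equiv\L[\gamma_j]\geq\tfrac1C$ gives the limit's constant positive speed exactly as in the paper. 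The only nitpick is terminological: in your closing remark the relevant property of $\B$ is parametrization-invariance, not scale-invariance ($\B$ scales like an inverse length).
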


\begin{proof}
    By the boundedness and constant-speed assumptions, we deduce
    \[
    \sup_{j}\|\partial_x\gamma_j\|_{L^2} = \sup_j\L[\gamma_j]<\infty
    \]
    and also
    \[
    \sup_{j}\|\gamma_j\|_{L^2} \leq \sup_{j}\|\gamma_j\|_{L^\infty} \leq \sup_{j}(|\gamma_j(0)|+\L[\gamma_j] ) <\infty.
    \]
    In addition, since
    \[
    \B[\gamma_j]=\int_I|\partial_s^2\gamma_j|^2ds=\frac{1}{\L[\gamma_j]^3}\int_I|\partial_x^2\gamma_j|^2dx,
    \]
    we also have
    \[
    \sup_{j}\|\partial_x^2\gamma_j\|_{L^2}^2 \leq \sup_{j}\L[\gamma_j]^3\B[\gamma_j] <\infty.
    \]
    These imply that $\{\gamma_j\}$ is bounded in $W^{2,2}(I;\R^n)$, thus having a subsequence that $W^{2,2}$-weakly converges to some $\gamma_\infty\in W^{2,2}(I;\R^n)$.
    Compact Sobolev embedding $W^{2,2}(I;\R^n)\subset\subset C^1(\bar{I};\R^n)$ also implies $C^1$-convergence (passing to a subsequence).
    Hence $\gamma_\infty$ must have constant speed $|\partial_x\gamma_\infty|\equiv\mathcal{L}[\gamma_\infty]\geq\inf_{j}\L[\gamma_j]>0$.
\end{proof}

\subsection{Smooth convergence}

Now we discuss when the above weak convergence becomes smooth, assuming that the sequence consists of elasticae.
As explained in the introduction, we look at the behavior of multipliers.
If an elastica solves equation \eqref{eq:EL_elastica} with multiplier $\lambda\in\R$, then we also call it \emph{$\lambda$-elastica} to specify the value of $\lambda$.
Note that $\lambda$ is uniquely determined by a given elastica unless it is a straight segment (for which any $\lambda$ is allowable).

We first prove that if the multipliers are uniformly bounded, then smooth convergence follows by Gagliardo--Nirenberg type interpolation estimates.
This technique is by now standard in the study of elastic flows since Dziuk--Kuwert--Sch\"atzle's pioneering work \cite{DziukKuwertSchatzle2002}.
Here we use results for open elastic flows in \cite{DallAcquaPozzi2014}.
The main advantages are direct applicability to our non-closed elasticae (as stationary solutions) and also their explicit dependence on $\lambda$.

\begin{proposition}\label{prop:bounded_lambda}
    Let $\{\gamma_j\}_{j=1}^\infty\subset C^\omega(\bar{I};\R^n)$ be a sequence of constant-speed $\lambda_j$-elasticae satisfying \eqref{eq:main_assumption} and converging to some curve $\gamma_\infty$ in the $C^1$ topology.
    Suppose that $\sup_j|\lambda_j|<\infty$.
    Then $\{\gamma_j\}$ smoothly converges to $\gamma_\infty$, and $\gamma_\infty$ is an elastica.
\end{proposition}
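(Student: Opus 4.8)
The plan is to upgrade the given $C^1$ convergence to smooth convergence by establishing uniform a priori bounds $\sup_j\|\nabla_s^m\kappa_j\|_{L^2(I)}\le C_m<\infty$ for every $m\ge0$, where all derivatives and norms are taken with respect to the bounded, uniformly nondegenerate arclength induced by the constant-speed parametrization. Granting such bounds, the compact embedding $W^{m+1,2}(I;\R^n)\hookrightarrow C^m(\bar I;\R^n)$ makes $\{\gamma_j\}$ precompact in each $C^m$; since any subsequential $C^m$-limit is in particular a $C^1$-limit and therefore equals $\gamma_\infty$ by uniqueness, a standard subsequence argument yields convergence of the \emph{full} sequence in every $C^m$. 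The engine producing the higher-order bounds is the Euler--Lagrange equation \eqref{eq:EL_elastica} itself: rewriting it as $\nabla_s^2\kappa=\tfrac12(\lambda\kappa-|\kappa|^2\kappa)$ and differentiating in $s$ expresses each $\nabla_s^{m+2}\kappa$ as a universal polynomial in $\lambda$ and $\kappa,\nabla_s\kappa,\dots,\nabla_s^{m+1}\kappa$. Hence, once a base-level bound on $\kappa_j$ is available, all higher $L^2$ (and then $L^\infty$) bounds follow by induction, with constants depending only on $C$, $\sup_j|\lambda_j|$, and $m$.

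The crux is therefore the base estimate, a uniform bound $\sup_j(\|\kappa_j\|_{L^\infty(I)}+\|\nabla_s\kappa_j\|_{L^2(I)})<\infty$. I would obtain this from the Gagliardo--Nirenberg-type interpolation inequalities for open curves in \cite{DallAcquaPozzi2014}, applied to each $\gamma_j$ viewed as a stationary solution of the corresponding elastic flow. Testing \eqref{eq:EL_elastica} against $\kappa$ produces $\|\nabla_s\kappa\|_{L^2}^2$ controlled by the subcritical term $\int_I|\kappa|^4\,ds$ and the lower-order term $\lambda\int_I|\kappa|^2\,ds$; the quartic term is handled by $\|\kappa\|_{L^4}^4\le C\|\kappa\|_{L^\infty}^2\|\kappa\|_{L^2}^2$ together with $\|\kappa\|_{L^\infty}^2\le C(\|\nabla_s\kappa\|_{L^2}\|\kappa\|_{L^2}+\L^{-1}\|\kappa\|_{L^2}^2)$, so that it can be absorbed after invoking $\B[\gamma_j]=\|\kappa_j\|_{L^2}^2\le C$ and $\L[\gamma_j]\ge1/C$. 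Throughout, the explicit dependence of the Dall'Acqua--Pozzi constants on $\lambda$ is what keeps all bounds uniform in $j$ given only $\sup_j|\lambda_j|<\infty$. It is also convenient to record the first integral $|\nabla_s\kappa|^2+\tfrac14|\kappa|^4-\tfrac\lambda2|\kappa|^2\equiv E$ along each elastica, which converts an $L^\infty$ bound on $\kappa$ into a pointwise, hence $L^\infty$, bound on $\nabla_s\kappa$.

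I expect the main obstacle to be precisely the closing of this base estimate with constants independent of $j$. Because the $\gamma_j$ are arbitrary elasticae with \emph{no imposed boundary conditions}, integrating \eqref{eq:EL_elastica} by parts generates boundary contributions such as $[\langle\nabla_s\kappa,\kappa\rangle]_{\partial I}$ that are a priori of the same order as the term $\|\nabla_s\kappa\|_{L^2}^2$ one is trying to control, so they cannot be absorbed by a naive Young inequality. Overcoming this is the technical heart of the proposition and is exactly where the quantitative open-curve interpolation estimates of \cite{DallAcquaPozzi2014}---which are designed to accommodate such boundary terms---together with the constant-speed normalization and the uniform length bounds in \eqref{eq:main_assumption}, must be deployed carefully.

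Finally, with uniform $C^m$ bounds in hand and full-sequence $C^m$ convergence established as above, it remains to identify the limit. Since $\sup_j|\lambda_j|<\infty$, a subsequence of $\{\lambda_j\}$ converges to some $\lambda_\infty\in\R$; passing to the limit in \eqref{eq:EL_elastica}, whose every term converges uniformly by the smooth convergence of $\gamma_j$ and $\kappa_j$, shows that $\gamma_\infty$ solves $2\nabla_s^2\kappa+|\kappa|^2\kappa-\lambda_\infty\kappa=0$, that is, $\gamma_\infty$ is a $\lambda_\infty$-elastica, as required.
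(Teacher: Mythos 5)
Your proposal is correct and takes essentially the same route as the paper: the paper likewise deduces uniform bounds $\sup_j\|\nabla_s^m\kappa_j\|_{L^2}\leq C(n,m,B,L_*,\Lambda)$ for all $m$ by applying the proof of \cite{DallAcquaPozzi2014}*{Lemma 4.4} to the $\gamma_j$ as stationary solutions, upgrades the $C^1$ convergence to smooth convergence by compact Sobolev embedding, and identifies the limit by passing to a convergent subsequence of $\{\lambda_j\}$ in \eqref{eq:EL_elastica}. Two minor remarks: the boundary terms you flag as the technical heart are in fact a non-issue for stationary solutions, since one can use \eqref{eq:EL_elastica} \emph{pointwise} to express $\nabla_s^{m+2}\kappa_j$ polynomially in lower-order covariant derivatives and close the induction by interpolation without ever integrating by parts; and before invoking the embedding $W^{m+1,2}\hookrightarrow C^m$ you need the standard conversion from covariant to full derivatives, i.e.\ bounds on $\|\partial_s^m\kappa_j\|_{L^2}$ (the paper cites \cite{DallAcquaPozzi2014}*{Lemma 4.6} for this), which your sketch silently skips.
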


\begin{proof}
    Since each $\gamma_j$ satisfies the assumption of \cite{DallAcquaPozzi2014}*{Lemma 3.1} (as a stationary solution), the proof of \cite{DallAcquaPozzi2014}*{Lemma 4.4} together with the boundedness assumptions
    \[B:=\sup_{j}\B[\gamma_j]<\infty, \quad L_*:=\inf_j\L[\gamma_j]>0, \quad \Lambda:=\sup_j|\lambda_j|<\infty\]
    implies that for every $m\geq0$ there is $C=C(n,m,B,L_*,\Lambda)>0$ such that 
    \[\sup_j\|\nabla_s^m\kappa_j\|_{L^2}\leq C.\]
    Hence by \cite{DallAcquaPozzi2014}*{Lemma 4.6} we also have (up to redefining $C$)
    \[\sup_j\|\partial_s^m\kappa_j\|_{L^2}\leq C.\]
    Since $\partial_s^m\kappa_j=\partial_s^{m+2}\gamma_j=\L[\gamma_j]^{-(m+2)}\partial_x^{m+2}\gamma_j$, for every $m\geq0$ we obtain
    \[
    \sup_j\|\partial_x^{m+2}\gamma_j\|_{L^2} \leq (L^*)^{m+2}C < \infty,
    \]
    where $L^*:=\sup_j\L[\gamma_j]<\infty$.
    Combining this boundedness with $C^1$-convergence implies smooth convergence (via standard compact Sobolev embeddings).
    In addition, passing to a subsequence, the bounded sequence $\{\lambda_j\}_j$ converges, so by taking $j\to\infty$ in equation \eqref{eq:EL_elastica} for $\gamma_j$ we find that $\gamma_\infty$ is also an elastica.
\end{proof}

The remaining case $\sup_j|\lambda_j|=\infty$ requires a more delicate and ad-hoc understanding of elasticae.
Our proof will be based on explicit formulae for elasticae due to Langer--Singer \cite{LangerSinger1984JLMS} (see also \cite{Singer2008,Miura2024survey}) and their integrability arguments.
We will thus frequently use Jacobi elliptic integrals and functions.
Here we adopt the notation in \cite{Miura2024survey}*{Appendix A}, where one can find all the necessary definitions used in this paper.
In particular, we choose the elliptic parameter $m=p^2\in[0,1]$ instead of the elliptic modulus $p$.

\begin{proposition}\label{prop:unbounded_lambda}
    Let $\{\gamma_j\}_{j=1}^\infty\subset C^\omega(\bar{I};\R^n)$ be a sequence of constant-speed $\lambda_j$-elasticae satisfying \eqref{eq:main_assumption} and converging to some curve $\gamma_\infty$ in the $C^1$ topology.
    Suppose that $\sup_j|\lambda_j|=\infty$.
    Then $\gamma_\infty$ is a straight segment.
\end{proposition}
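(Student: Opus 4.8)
\emph{Plan.}
Since $\{\partial_x^2\gamma_j\}$ is bounded in $L^2(I)$ by \eqref{eq:main_assumption} (exactly as in the proof of Lemma \ref{lem:weak_compactness}) and $\gamma_j\to\gamma_\infty$ in $C^1$, the $C^1$-convergence forces $\partial_x^2\gamma_j\to\partial_x^2\gamma_\infty$ in the distributional sense, so \emph{every} $L^2$-weak subsequential limit of $\partial_x^2\gamma_j$ must coincide with $\partial_x^2\gamma_\infty$. As $\gamma_\infty$ has constant speed $\L[\gamma_\infty]>0$ (Lemma \ref{lem:weak_compactness}), it is a straight segment if and only if $\partial_x^2\gamma_\infty\equiv0$. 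Hence it suffices to exhibit a subsequence along which $\partial_x^2\gamma_j\rightharpoonup0$ weakly in $L^2$; since $|\partial_x\gamma_j|\equiv\L[\gamma_j]\in[\tfrac1C,C]$, this is equivalent to $\kappa_j\rightharpoonup0$. The whole argument thus reduces to proving weak vanishing of the curvatures along a subsequence.

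I would first treat the planar case, where the signed curvature $k_j$ solves the scalar reduction $2\partial_s^2k_j+k_j^3-\lambda_jk_j=0$ of \eqref{eq:EL_elastica}, with first integral $(\partial_sk_j)^2=E_j+\tfrac{\lambda_j}{2}k_j^2-\tfrac14k_j^4$. By the Langer--Singer formulae \cite{LangerSinger1984JLMS} (in the notation of \cite{Miura2024survey}), $k_j$ equals, up to translation and sign, an amplitude $a_j$ times a Jacobi elliptic function of $b_js$ with parameter $m_j\in[0,1]$, and matching coefficients expresses $(a_j,b_j)$ through $(\lambda_j,m_j)$. Passing to a subsequence, either $\lambda_j\to-\infty$ or $\lambda_j\to+\infty$. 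If $\lambda_j\to-\infty$, the potential $\tfrac18k^4-\tfrac{\lambda_j}{4}k^2$ has a single well, so $k_j$ is of sign-changing $\cn$-type, $k_j(s)=a_j\cn(b_j(s-c_j),m_j)$ with $b_j^2\sim\tfrac12|\lambda_j|\to\infty$. Since the $s$-period is $O(1/b_j)$, the arc of length $\L[\gamma_j]\in[\tfrac1C,C]$ contains $\sim b_j$ oscillations; as $\B[\gamma_j]\leq C$ this forces $a_j$ to stay bounded and $m_j\to0$, so $\cn(\cdot,m_j)\to\cos$. Then $k_j$ is a bounded, rapidly oscillating function of zero mean over each period, and a Riemann--Lebesgue argument gives $\int k_j\varphi\,ds\to0$ for every fixed $\varphi$, i.e.\ $k_j\rightharpoonup0$.

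The case $\lambda_j\to+\infty$ is the crux and the main obstacle. Here the potential is a double well and $k_j$ is of $\dn$- or $\cn$-type with $a_j\sim\sqrt{\lambda_j}$ and $b_j\sim\sqrt{\lambda_j}$. A computation with the complete elliptic integral $K(m_j)$ shows that the bending energy carried by one full period is $\gtrsim a_j^2/b_j\sim\sqrt{\lambda_j}\to\infty$; since $\B[\gamma_j]\leq C$, the arc must span strictly less than one period, and moreover $m_j\to1$ (otherwise the period $O(K(m_j)/b_j)$ would be $o(1)$, many periods would fit, and the energy would diverge). Using the degeneration $\dn(u,m_j)\to\sech(u)$, uniform on the relevant range, the curvature is comparable to $a_j\sech(b_j(s-c_j))$, so away from any fixed neighborhood of the single point $s_*:=\lim c_j$ one has $|k_j|\lesssim a_je^{-b_j\varepsilon}\to0$ uniformly, while $\|k_j\|_{L^2}\leq\sqrt{C}$ stays bounded. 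A bounded $L^2$ sequence whose mass escapes into an arbitrarily small neighborhood of a single point converges weakly to zero, so again $k_j\rightharpoonup0$. (Circles, for which $\B=\lambda\L\to\infty$, and the $\cn$-type with $m_j\to1$ are absorbed into this analysis.) The delicate part is precisely this singular behavior in the moduli space: converting the scalar constraints $\B[\gamma_j]\leq C$ and $\L[\gamma_j]\in[\tfrac1C,C]$ into the exact degeneration of $(a_j,b_j,m_j)$ and into the localization estimate, for which I would rely on the explicit formulae for length and energy in terms of $K$ and the incomplete elliptic integral of the second kind.

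Finally, for $n\geq3$ and non-planar $\gamma_j$, I would invoke the Langer--Singer classification in $\R^3$, where the torsion satisfies the conservation law $\tau=c_j/|\kappa|^2$ and $k_j=|\kappa_j|$ obeys a modified scalar equation with an extra $c_j^2/k_j^3$ term; these solutions are again explicit elliptic functions of $b_js$, and the same dichotomy (rapid oscillation versus sub-period concentration) applies to $k_j$. The only additional step is to upgrade $|\kappa_j|\rightharpoonup0$ to $\kappa_j\rightharpoonup0$: in the concentration regime this is immediate from the uniform decay of $|\kappa_j|$ away from $s_*$, and in the oscillation regime it follows because the Frenet frame spins with frequency $\sim b_j\to\infty$, so that $\kappa_j=k_jN_j$ also averages to zero. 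In every case $\kappa_j\rightharpoonup0$, whence $\partial_x^2\gamma_\infty\equiv0$ and $\gamma_\infty$ is a straight segment.
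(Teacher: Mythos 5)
Your reduction is fine: since $\partial_x^2\gamma_j=\L[\gamma_j]^2\kappa_j$ is bounded in $L^2$ and every weak subsequential limit must equal $\partial_x^2\gamma_\infty$, it indeed suffices to produce one subsequence with $\kappa_j\rightharpoonup0$, and your planar analysis (wavelike oscillation with $m_j\to0$ for $\lambda_j\to-\infty$; sub-period concentration with $m_j\to1$ for $\lambda_j\to+\infty$) is essentially sound and parallels computations that appear in the paper. The genuine gap is in the step to $n\geq3$, where your claim that ``the same dichotomy applies to $k_j$'' and that one always gets $|\kappa_j|\rightharpoonup0$ is false. Concretely, take arclength-parametrized helices $\gamma_j$ on $\bar I$ with constant curvature $k_j\equiv1$ and constant torsion $t_j\to\infty$: by the Frenet equations, $2\nabla_s^2\kappa_j=-2k_jt_j^2N_j$, so \eqref{eq:EL_elastica} holds with $\lambda_j=k_j^2-2t_j^2\to-\infty$, while $\B[\gamma_j]=\L[\gamma_j]=1$, so \eqref{eq:main_assumption} holds; the helix radius $k_j/(k_j^2+t_j^2)$ tends to $0$, and (after translation) $\gamma_j\to$ straight segment in $C^1$. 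Yet $|\kappa_j|\equiv1$, so the scalar curvature neither oscillates around zero nor concentrates. This regime is unavoidable: for a non-planar elastica the scalar curvature is nonnegative (indeed nowhere zero when $c_j\neq0$ by \eqref{eq:det_formula}), so $k_j\rightharpoonup0$ would force the total curvature to vanish, which simply does not happen in general. For space curves the weak vanishing of the vector $\kappa_j=k_jN_j$ must come from the normal direction equidistributing under large torsion, not from $k_j$ itself.

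Your proposed upgrade mechanism does not repair this. In the planar oscillation regime the frame does \emph{not} spin with frequency $\sim b_j$: the turning angle satisfies $\theta_j'=k_j$ with $k_j=A_j\cn(b_js+\beta_j,m_j)$, whose antiderivative is uniformly $O(A_j/b_j)\to0$, so $N_j$ converges uniformly to a constant (the correct, easy fix there is that in the plane $N_j$ is the rotated tangent, hence converges uniformly by the assumed $C^1$ convergence). And where the frame genuinely does spin (the helical/torsion regime), the heuristic ``$k_jN_j$ averages to zero'' requires justification, because products of oscillations at the same frequency need not average to zero (as $\cos^2$ shows); one would have to rule out resonance between $k_j$ and the frame, which is exactly the delicate moduli-space analysis your plan defers. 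The paper's proof circumvents all frame averaging via Langer--Singer's Killing field $J=(k^2-\lambda)T+2(\partial_sk)N+2ktB$, constant along any elastica: in cylindrical coordinates about the axis of $J$ the distance to the axis is \emph{exactly} $r_j=\frac{2}{a_j^2}\sqrt{a_j^2k_j^2-4c_j^2}$ with $a_j^2=(k_j^2-\lambda_j)^2+4(\partial_sk_j)^2+4k_j^2t_j^2$, so it suffices to show $a_j\to\infty$ along a subsequence (giving $r_j\to0$ in $L^2$, i.e.\ collapse of the \emph{position} onto a line, which with $C^1$ convergence forces the segment). This follows from $a_j\geq|A_j^2-\lambda_j|$, trivially when $\lambda_j\to-\infty$, and via the degeneration $m_j\to1$, $w_j\to1$, $\lambda_j/A_j^2\to\frac12$ when $\lambda_j\to+\infty$ (close in spirit to your concentration analysis). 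Note that the torsion term $4k_j^2t_j^2$ in $a_j^2$ is precisely what absorbs the helical regime your dichotomy misses: for the helices above, $a_j^2\geq 4t_j^4\to\infty$ directly.
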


\begin{proof}
    As $C^1$-convergence is already assumed, it is sufficient to prove that up to taking isometric transformations and subsequences (we will not relabel) the sequence converges to an axis in a certain sense; we will prove that $\gamma_j-\langle\gamma_j,e_n \rangle e_n \to 0$ in $L^2$, where $\{e_i\}_{i=1}^n$ denotes the canonical basis.
    We may assume that (for a subsequence) each $\gamma_j$ is not a straight segment, since otherwise the assertion is trivial.
    
    It is known (see e.g.\ \cite{Miura2024survey}*{Corollary 2.5}) that any elastica is contained in a three-dimensional affine subspace, as the equation is of fourth order.
    Hence, without loss of generality we may assume $n=3$.

    By the known explicit formula for elasticae in $\R^3$ (see e.g.\ \cite{Miura2024survey}*{Theorem 2.12}),
    for each non-straight $\lambda_j$-elastica $\gamma_j:\bar{I}\to\R^3$ there are (unique) parameters $m_j,w_j,A_j,c_j\in\R$ such that
    \begin{equation}\label{eq:parameter_region}
        0\leq m_j\leq w_j\leq 1, \quad w_j>0, \quad A_j> 0,
    \end{equation}
    with the following properties: 
    Let $k_j$ ($=|\partial_s^2\gamma_j|$) be the scalar curvature.
    Let $t_j$ be the torsion of $\gamma_j$; if $\gamma_j$ is planar, then we interpret $t_j\equiv0$.
    Then for some $\beta_j\in\mathbf{R}$,
    \begin{equation}\label{eq:curvature_formula}
      k_j(s)^2=A_j^2\left( 1-\frac{m_j}{w_j}\sn^2\left(\frac{A_j}{2\sqrt{w_j}}s+\beta_j , m_j \right) \right),
    \end{equation}
    where $s\in[0,\L[\gamma_j]]$ denotes the arclength parameter,
    and also
    \begin{equation}\label{eq:det_formula}
        k_j(s)^2t_j(s) = c_j,
    \end{equation}
    \begin{equation}\label{eq:lambda_formula}
      \lambda_j = \frac{A_j^2}{2w_j}(3w_j-m_j-1),
    \end{equation}
    and
    \begin{equation}\label{eq:c_formula}
      4c_j^2 = \frac{A_j^6}{w_j^2}(1-w_j)(w_j-m_j).
    \end{equation}

    In addition, we recall Langer--Singer's integrability results via Killing fields.
    It is known \cite{Singer2008}*{Equation (4)} (and easily verified by directly differentiating) that any elastica $\gamma$ has an associated constant vector field
    \[
    J:=(k^2-\lambda)T + 2\partial_skN + 2ktB \equiv\text{const.},
    \]
    where $T,N,B$ denote the unit tangent, normal, and binormal, respectively (we interpret $2ktB=0$ when $\gamma$ is planar).
    Clearly, $|J|\neq0$ if $k$ is nonconstant (i.e., $\partial_sk\not\equiv0$).
    Also, if $k$ is a nonzero constant, then in terms of the above parameters we have $m=0$, $k^2=A^2$, $\lambda=\frac{3w-1}{2w}A^2$, so $|J|\neq0$ unless $w=1$, i.e., unless $\gamma$ is a circular arc such that $k^2\equiv\lambda$.
    Hence, for our sequence $\{\gamma_j\}$, passing to a subsequence, we have either
    \begin{equation}\label{eq:a_identity}
        a_j^2 := |J_j|^2\equiv (k_j(s)^2-\lambda_j)^2 + 4 \partial_sk_j(s)^2 + 4k_j(s)^2t_j(s)^2 >0,
    \end{equation}
    or each $\gamma_j$ is a circular arc of radius $|\lambda_j|^{-\frac{1}{2}}$.
    The latter case is ruled out by \eqref{eq:main_assumption} and $\sup_j|\lambda_j|=\infty$.
    Therefore, strict positivity \eqref{eq:a_identity} must hold.
    Then, the argument in \cite{Singer2008}*{p.7} implies that (up to isometries) each $\gamma_j$ can be explicitly parametrized in the standard cylindrical coordinate $(r,\theta,z)$.
    In particular, the radius function $r_j=|\gamma_j-\langle \gamma_j,e_3 \rangle e_3|$ is given by 
    \begin{equation}\label{eq:radius}
        r_j(s) = \frac{2}{a_j^2}\sqrt{a_j^2k_j(s)^2-4c_j^2}.
    \end{equation}
    
    Now, in order to prove the assertion, it is sufficient to show that, passing to a subsequence, $r_j\to0$ in a certain (for example $L^2$) sense.
    Since
    \[
    \int_I r_j^2dx = \frac{1}{\L[\gamma_j]} \int_I r_j^2 ds \leq \frac{1}{\L[\gamma_j]}\frac{4}{a_j^2}\int_Ik_j^2ds \leq \frac{4}{a_j^2}\frac{\sup_{j}\B[\gamma_j]}{\inf_j\L[\gamma_j]},
    \]
    it is sufficient to show that
    \[
    \sup_j|a_j|=\infty.
    \]
    Since identity \eqref{eq:a_identity} remains true if we extend the functions $k_j$ and $t_j$ to $\R$ via formulae \eqref{eq:curvature_formula} and \eqref{eq:det_formula}, we can particularly pick a point $s_*\in\R$ at which the right-hand side of \eqref{eq:curvature_formula} takes the maximum, i.e., $k_j(s_*)^2=A_j^2$.
    Inserting this into \eqref{eq:a_identity} implies $a_j^2 \geq (A_j^2-\lambda_j)^2$.
    Hence it is sufficient to show that 
    \begin{equation}\label{eq:A^2-lambda}
        \sup_j |A_j^2-\lambda_j|=\infty.
    \end{equation}
    
    By the assumption $\sup_j|\lambda_j|=\infty$, passing to a subsequence, we have either $\lambda_j\to\infty$ or $\lambda_j\to-\infty$.
    If $\lambda_j\to-\infty$, then clearly $|A_j^2-\lambda_j|\to\infty$.
    In what follows we suppose
    \[
    0\leq\lambda_j\to\infty.
    \]
    
    By \eqref{eq:lambda_formula} we have $\lambda_j \leq \frac{3}{2}A_j^2$ and hence
    \begin{equation}\label{eq:Atoinfty}
        A_j\to\infty.
    \end{equation}
    Also, using positivity $\lambda_j\geq0$ in \eqref{eq:lambda_formula} we deduce $3w_j-m_j-1\geq0$.
    In particular,
    \begin{equation}\label{eq:w_bounded}
        \frac{1}{3}\leq w_j\leq 1.
    \end{equation}
    Then, using $\sn^2+\cn^2=1$, we estimate
    \begin{align*}
        \B[\gamma_j] &=\int_0^{\L[\gamma_j]}A_j^2\left( 1-\frac{m_j}{w_j}\sn^2\left(\frac{A_j}{2\sqrt{w_j}}s+\beta_j , m_j \right) \right) ds\\
        &\geq A_j^2 \int_0^{\L[\gamma_j]}\cn^2\left(\frac{A_j}{2\sqrt{w_j}}s+\beta_j , m_j \right)  ds\\
        &= 2A_j\sqrt{w_j}\int_{\beta_j}^{A_j\L[\gamma_j]/(2\sqrt{w_j})+\beta_j}\cn^2(u, m_j)  du\\
        &\geq \frac{2A_j}{\sqrt{3}}\int_{\beta_j}^{\frac{1}{2}A_j\L[\gamma_j]+\beta_j}\cn^2(u, m_j)  du,
    \end{align*}
    where in the last inequality we used \eqref{eq:w_bounded}.
    By \eqref{eq:main_assumption} and \eqref{eq:Atoinfty}, we need to have
    \begin{equation}\label{eq:integral_limit}
        c_j:=\int_{\beta_j}^{\frac{1}{2}A_j\L[\gamma_j]+\beta_j}\cn^2(u, m_j)  du \to0.
    \end{equation}
    
    Now we prove by contradiction that 
    \begin{equation}\label{eq:m_to_1}
        m_j\to1.
    \end{equation}
    If otherwise, there is a subsequence such that $m_j\to m_\infty\in[0,1)$.
    Then the $2K(m_j)$-periodic integrand $\cn^2(u, m_j)$ locally uniformly converges to $\cn^2(u, m_\infty)$ with finite period $2K(m_\infty)\in[\frac{\pi}{2},\infty)$.
    By periodicity we may assume that $\beta_j\in[-2K(m_j),0]$ and in particular $\{\beta_j\}$ is bounded.
    By \eqref{eq:main_assumption} and \eqref{eq:Atoinfty} we have $A_j\L[\gamma_j]\to\infty$.
    These properties imply that $[0,2K(m_\infty)]\subset[\beta_j,\frac{1}{2}A_j\L[\gamma_j]+\beta_j]$ for any large $j$, so
    \[
    0\leq \int_0^{2K(m_\infty)}\cn^2(u, m_\infty)  du = \lim_{j\to\infty}\int_0^{2K(m_\infty)}\cn^2(u, m_j)  du \leq \lim_{j\to\infty} c_j \overset{\eqref{eq:integral_limit}}{=} 0
    \]
    but this is a contradiction.
    Hence \eqref{eq:m_to_1} follows.

    By \eqref{eq:m_to_1} and $m_j\leq w_j \leq 1$ we also have 
    \begin{equation}\notag
        w_j\to 1.
    \end{equation}
    From this with \eqref{eq:m_to_1} and \eqref{eq:lambda_formula} we deduce that $\frac{\lambda_j}{A_j^2} \to \frac{1}{2}$ and hence, noting \eqref{eq:Atoinfty},
    \[
    |A_j^2-\lambda_j|=A_j^2 \left| 1-\frac{\lambda_j}{A_j^2} \right|\to\infty,
    \]
    which implies \eqref{eq:A^2-lambda}.
    The proof is now complete.
\end{proof}

\begin{proof}[Proof of Theorem \ref{thm:main_compactness}]
    Letting $b_j:=-\gamma_j(0)$, the sequence $\{\bar{\gamma}_j+b_j\}$ satisfies the assumption of Lemma \ref{lem:weak_compactness}, and hence a subsequence $\{\bar{\gamma}_{j'}+b_{j'}\}$ converges to some curve $\bar{\gamma}_\infty$ in the $C^1$-topology.
    
    Now, suppose that $\bar{\gamma}_\infty$ is not a straight segment.
    Then by the contrapositive of Proposition \ref{prop:unbounded_lambda}, we have $\sup_{j'}|\lambda_{j'}|<\infty$, where $\lambda_{j'}$ denotes a multiplier of $\bar{\gamma}_{j'}+b_{j'}$.
    Hence by Proposition \ref{prop:bounded_lambda}, the convergence is smooth and $\bar{\gamma}_\infty$ is an elastica.

    On the other hand, if $\bar{\gamma}_\infty$ is a straight segment, then (convergence may not be smooth but) $\bar{\gamma}_\infty$ is a trivial elastica, so in any case the limit curve $\bar{\gamma}_\infty$ must be an elastica.
    The proof is complete.
\end{proof}

\subsection{Counterexamples}

Finally, we provide counterexamples to smooth convergence in Theorem \ref{thm:main_compactness}.
Of course, the limit curve must be a straight segment.
To this end it is sufficient to construct sequences of constant-speed elasticae $\{\gamma_j\}$ satisfying \eqref{eq:main_assumption} and also
\[
\inf_j\B[\gamma_j]>0.
\]
Indeed, this is equivalent to $\inf_j\|\partial_x^2\gamma_j\|_{L^2}>0$ under \eqref{eq:main_assumption}, so any subsequence cannot converge to the segment smoothly, nor even in the $W^{2,2}$-strong topology.

In fact, we can construct such examples for both $\lambda_j\to-\infty$ and $\lambda_j\to\infty$.
These cases correspond to different phenomena, namely of curvature oscillation and concentration, respectively.


\begin{example}[Curvature oscillation]\label{ex:oscillation}
    We construct an example as in Figure \ref{fig:curvature_oscillation}.
    Consider a sequence $\{\gamma_j\}\subset C^\omega(\bar{I};\R^n)$ of arclength parametrized elasticae such that for all $j\geq1$,
    \[
    \L[\gamma_j]=1.
    \]
    We will choose appropriate parameters in \eqref{eq:curvature_formula}--\eqref{eq:c_formula}.
    Setting $c_j=0$, $w_j=m_j\in(0,1)$, and $\beta_j=0$, we obtain a planar (wavelike) elastica $\gamma_j$ such that
    \[
    k_j^2(s) = A_j^2\cn^2\left(\frac{A_j}{2\sqrt{m_j}}s,m_j\right), \quad \lambda_j=\frac{A_j^2}{2m_j}(2m_j-1).
    \]
    We then compute
    \begin{align}\label{eq:example_bending}
        \B[\gamma_j] &= \int_0^1k_j^2(s)ds = A_j\sqrt{m_j} \int_0^{A_j/\sqrt{m_j}}\cn^2(u,m_j)^2 du.
    \end{align}
    Now, set $A_j=2K(m_j)$ and $m_j=1/j^2$.
    Since $\cn^2(\cdot,m)$ is $2K(m)$-periodic, we compute
    \[
    \B[\gamma_j]= \frac{2K(m_j)}{j}\int_0^{2K(m_j)j} \cn^2(u,m_j)du = 2K(m_j)\int_0^{2K(m_j)} \cn^2(u,m_j)du >0.
    \]
    Since $\lim_{m\to0}\cn(u,m)=\cos{u}$ and $\lim_{m\to0}2K(m)=\pi$ (\cite{Miura2024survey}*{Appendix A}), the bounded convergence theorem implies
    \[
    \lim_{j\to\infty}\B[\gamma_j]= \pi\int_0^{\pi} \cos^2{u}du = \frac{\pi^2}{2}>0.
    \]
    This with $\L[\gamma_j]=1$ implies \eqref{eq:main_assumption} and $\inf_j\B[\gamma_j]>0$.
    Thus we obtain a counterexample.
    For this example we have
    \[
    \lambda_j\to-\infty.
    \]
    Also, since each $\gamma_j$ is planar, the signed curvature is well-defined and given by
    \[
    k_j(s)=2K(m_j)\cn(jK(m_j)s,m_j).
    \] 
    This converges $L^2$-weakly to $0$ in an oscillatory way.
\end{example}

\begin{example}[Curvature concentration]\label{ex:concentration}
    We construct an example as in Figure \ref{fig:curvature_concentration}.
    Again consider arclength parametrized elasticae $\{\gamma_j\}$ such that $\L[\gamma_j]=1$.
    Set $c_j=0$ and $w_j=m_j=1$.
    Then each $\gamma_j$ is a planar (borderline) elastica such that
    \[
    k_j^2(s)=A_j^2\sech^2\left(\frac{A_j}{2}s+\beta_j\right), \quad \lambda_j=\frac{A_j^2}{2}.
    \]
    Now we choose $A_j=2j$, and compute
    \[
    \B[\gamma_j] = 4j\int_{0}^j\sech^2(u+\beta_j)du.
    \]
    Define the continuous map $f_j:\R\to(0,\infty)$ by $f(r):= \int_{0}^j\sech^2(u+r)du$.
    For each $j$ we have $f_j(0)\geq f_1(0)=: c>0$ while $\lim_{r\to\infty}f_j(r) =0$, so there is $r_j\geq 0$ such that $f_j(r_j)=c/j$.
    Setting $\beta_j=r_j$ yields for all $j\geq1$,
    \[
    \B[\gamma_j] = 4c>0.
    \]
    This also implies \eqref{eq:main_assumption} and $\inf_j\B[\gamma_j]>0$, giving a counterexample.
    In this case
    \[
    \lambda_j\to\infty,
    \]
    and the signed curvature is given by
    \[
    k_j(s)=2j\sech(js+r_j).
    \]
    Since $0\leq k_j(s) \leq Cje^{-js}$, the curvature uniformly converges to $0$ on any interval $[\varepsilon,1]$ with small $\varepsilon>0$.
    This example is thus of concentration type.
\end{example}

\section{Stability}\label{sec:stability}

\subsection{Fixed-length case}

Here we prove Theorem \ref{thm:stability_fixed}.
This case directly benefits from the multiplier-free statement of Theorem \ref{thm:main_compactness}.

We first check that smooth compactness follows from Theorem \ref{thm:main_compactness}.

\begin{proposition}\label{prop:clamped_compactness}
    Let $(\Gamma,L)\in \hat{\bA}'$ and $\{(\Gamma_j,L_j)\}_{j=1}^\infty\subset \hat{\bA}$ be a sequence converging to $(\Gamma,L)$.
    Let $\{\gamma_j\}\subset C^\omega(\bar{I};\R^n)$ be a sequence of elasticae such that $\gamma_j\in\A_{\Gamma_j,L_j}$ for $j\geq1$ and $\sup_j\B[\gamma_j]<\infty$.
    Then there is a subsequence of $\{\gamma_j\}$ smoothly converging to an elastica in $\A_{\Gamma,L}$.
\end{proposition}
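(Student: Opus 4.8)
The plan is to deduce Proposition \ref{prop:clamped_compactness} from the main compactness Theorem \ref{thm:main_compactness} by verifying its hypotheses and then identifying the limit. First I would check the boundedness assumption \eqref{eq:main_assumption}: the bending energy bound $\sup_j\B[\gamma_j]<\infty$ is given directly. Since $\gamma_j\in\A_{\Gamma_j,L_j}$ these are constant-speed curves with $\L[\gamma_j]=L_j$, and because $(\Gamma_j,L_j)\to(\Gamma,L)$ with $L>0$ (as $(\Gamma,L)\in\hat{\bA}'$ forces $L>|P_1-P_0|\geq 0$, so $L>0$), the sequence $\{L_j\}$ converges to $L>0$ and is therefore bounded above and below away from zero for large $j$. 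This gives $\tfrac1C\leq\L[\gamma_j]\leq C$, so \eqref{eq:main_assumption} holds. Moreover the endpoints $\gamma_j(0)=P_0^{(j)}\to P_0$ are bounded, so by Remark \ref{rem:invariance} no translation vectors are needed. Applying Theorem \ref{thm:main_compactness} yields a subsequence converging in $C^1$ (and $W^{2,2}$-weakly) to a constant-speed elastica $\gamma_\infty$.

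The crux is then to show $\gamma_\infty\in\A_{\Gamma,L}$ and, in particular, that $\gamma_\infty$ is \emph{not} a straight segment, so that Theorem \ref{thm:main_compactness} upgrades the convergence to smooth. The boundary conditions pass to the limit easily: $C^1$-convergence gives $\gamma_\infty(0)=\lim\gamma_j(0)=P_0$, $\gamma_\infty(1)=P_1$, and $(\gamma_\infty)_s(0)=\lim(\gamma_j)_s(0)=V_0$, $(\gamma_\infty)_s(1)=V_1$, using that the tangents $(\gamma_j)_s(0)=V_0^{(j)}\to V_0$ converge by hypothesis and that $C^1$-convergence of constant-speed curves controls the unit tangent. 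The length satisfies $\L[\gamma_\infty]=\lim L_j=L$ by $C^1$-convergence. Thus $\gamma_\infty\in\A_{\Gamma,L}$.

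The main obstacle is ruling out that $\gamma_\infty$ is a straight segment. This is exactly where the assumption $(\Gamma,L)\in\hat{\bA}'$ is used. I would argue that $\gamma_\infty\in\A_{\Gamma,L}$ cannot be a segment: a straight segment joining $\gamma_\infty(0)=P_0$ to $\gamma_\infty(1)=P_1$ has length exactly $|P_1-P_0|$, whereas $\L[\gamma_\infty]=L>|P_1-P_0|$ by the definition of $\hat{\bA}'$. A segment of length $L$ connecting these two distinct-or-equal endpoints is geometrically impossible when $L\neq|P_1-P_0|$ (it would have to either overshoot and backtrack, contradicting being a segment, or fail to have constant speed $L$). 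Hence $\gamma_\infty$ is genuinely non-straight.

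With $\gamma_\infty$ established as a non-straight elastica, the second assertion of Theorem \ref{thm:main_compactness} applies verbatim to give $C^m$-convergence for every $m\geq0$, i.e.\ smooth convergence of the subsequence to $\gamma_\infty\in\A_{\Gamma,L}$, completing the proof. The only genuinely nontrivial point is the exclusion of the segment via the strict inequality $L>|P_1-P_0|$ encoded in $\hat{\bA}'$; everything else is a routine transfer of boundary data and length through $C^1$-convergence. I would make sure to state clearly that it is precisely the membership in $\hat{\bA}'$ (rather than merely $\hat{\bA}$) that prevents the degenerate segment limit and thereby unlocks smooth convergence.
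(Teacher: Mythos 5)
Your proof is correct and takes essentially the same route as the paper: verify \eqref{eq:main_assumption} from $\sup_j\B[\gamma_j]<\infty$ and $L_j\to L>0$, apply Theorem \ref{thm:main_compactness} to get $C^1$-subconvergence to an elastica, pass the clamped boundary data and length to the limit so that $\gamma_\infty\in\A_{\Gamma,L}$, and rule out the straight segment via $L>|P_1-P_0|$ from $(\Gamma,L)\in\hat{\bA}'$ to upgrade to smooth convergence. Your explicit handling of the translation vectors via Remark \ref{rem:invariance} and of the segment exclusion merely fills in details the paper leaves implicit.
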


\begin{proof}
    Since $\sup_j\B[\gamma_j]<\infty$ and $L_j\to L$, the sequence $\{\gamma_j\}$ satisfies \eqref{eq:main_assumption}.
    Then by Theorem \ref{thm:main_compactness} there is a subsequence of $\{\gamma_j\}$ converging to an elastica $\gamma_\infty$ at least in the $C^1$ topology.
    So in particular $\gamma_\infty\in\A_{\Gamma,L}$, and hence $\gamma_\infty$ cannot be a segment.
    By Theorem \ref{thm:main_compactness} the convergence is smooth.
\end{proof}

To improve this convergence to the full limit, an important tool is continuity of the minimal energy.
In fact, we only need upper semicontinuity for later use, but here we address continuity because of its fundamentality.

\begin{lemma}\label{lem:minimal_energy}
    The minimal energy function $m:\hat{\bA}\to(0,\infty)$ defined by
    \[
    m(\Gamma,L):=\min_{\gamma\in\mathcal{A}_{\Gamma,L}} \B[\gamma]
    \]
    is continuous at any $(\Gamma,L)\in\hat{\bA}'$.
\end{lemma}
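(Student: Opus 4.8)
The plan is to prove continuity at a fixed point $(\Gamma,L)\in\hat{\bA}'$ by establishing lower and upper semicontinuity separately. Throughout, let $\{(\Gamma_j,L_j)\}\subset\hat{\bA}$ be an arbitrary sequence converging to $(\Gamma,L)$, and let $\gamma\in\A_{\Gamma,L}$ denote a minimizer, so that $m(\Gamma,L)=\B[\gamma]<\infty$.

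Lower semicontinuity, $\liminf_j m(\Gamma_j,L_j)\geq m(\Gamma,L)$, should follow almost immediately from the smooth compactness already in hand. We may assume the liminf is finite, since otherwise the inequality is trivial. Passing to a subsequence realizing the liminf, choose minimizers $\gamma_j\in\A_{\Gamma_j,L_j}$, so that $\B[\gamma_j]=m(\Gamma_j,L_j)$ stays bounded and $\{\gamma_j\}$ consists of elasticae satisfying $\sup_j\B[\gamma_j]<\infty$ with $L_j\to L$. By Proposition \ref{prop:clamped_compactness} a further subsequence converges smoothly to an elastica $\gamma_\infty\in\A_{\Gamma,L}$. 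Smooth convergence forces $\B[\gamma_j]\to\B[\gamma_\infty]$, and since $\gamma_\infty$ is admissible we obtain $\liminf_j m(\Gamma_j,L_j)=\B[\gamma_\infty]\geq m(\Gamma,L)$.

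The core of the lemma is upper semicontinuity, $\limsup_j m(\Gamma_j,L_j)\leq m(\Gamma,L)$, which I would prove by constructing explicit admissible competitors from the limit minimizer $\gamma$, regarded as an arclength curve on $[0,L]$. Because $(\Gamma,L)\in\hat{\bA}'$ yields the strict inequality $|P_1-P_0|<L$, the minimizer is genuinely non-rigid, and this geometric slack is precisely what allows nearby boundary data to be realized. The idea is to modify $\gamma$ only on two short arcs adjacent to its endpoints so as to match the perturbed positions $P_0^j,P_1^j$, the tangents $V_0^j,V_1^j$, and the exact total length $L_j$, while leaving the bulk of the curve untouched; after a constant-speed reparametrization on $I$ this produces $\tilde{\gamma}_j\in\A_{\Gamma_j,L_j}$. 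Since the required adjustments are of size $O(|\Gamma_j-\Gamma|+|L_j-L|)\to0$ and are absorbed on arcs of small length, the bending contribution of the inserted pieces can be made to vanish, giving $\B[\tilde{\gamma}_j]\to\B[\gamma]$ and hence $\limsup_j m(\Gamma_j,L_j)\leq m(\Gamma,L)$.

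The delicate step, and the one I expect to require the most care, is this competitor construction: the endpoint modifications must \emph{simultaneously} match the first-order boundary data at both ends, hit the prescribed length $L_j$ exactly, and keep the $W^{2,2}$-energy cost quantitatively small. The strict inequality $|P_1-P_0|<L$ is essential, as it permits both small increases and small decreases of length to be accommodated by local bending without pushing the curve toward the degenerate straight configuration; on $\hat{\bA}_s$ this slack disappears, which is consistent with restricting to $\hat{\bA}'$. Concretely, I would realize the correction through a smooth cutoff interpolation between $\gamma$ and short steering arcs near the endpoints, and then verify the energy estimate by bounding the curvature of the inserted pieces in terms of the smallness of the boundary perturbation.
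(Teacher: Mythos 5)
Your lower semicontinuity argument is correct and is essentially the paper's: minimizers for $(\Gamma_j,L_j)$ with bounded energy feed into Proposition \ref{prop:clamped_compactness}, and smooth subsequential convergence to an admissible elastica gives the bound. The only (harmless) variation is that you obtain the energy bound from finiteness of the liminf along a subsequence, whereas the paper derives $\sup_j\B[\gamma_j]<\infty$ from the previously proved upper semicontinuity.

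The genuine gap is in upper semicontinuity. Your construction modifies $\gamma$ only on short arcs adjacent to the endpoints and asserts that these arcs can simultaneously match $\Gamma_j$ and hit the length $L_j$ \emph{exactly} with $o(1)$ energy cost; two things are missing. First, there is no mechanism that produces the exact length: a cutoff interpolation with ``steering arcs'' yields some length close to $L$, and to solve $\L[\tilde\gamma_j]=L_j$ exactly one needs a tunable one-parameter family together with an intermediate value argument---smallness estimates alone do not close this. Second, and more seriously, when $L_j<L$ (take $\Gamma_j=\Gamma$ for concreteness) your local modification must \emph{shorten} the curve while preserving endpoint positions and tangents; this is impossible on an arc where $\gamma$ is straight, and in general the shortening capacity of an endpoint arc is controlled by the curvature available there---a fact you never invoke. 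The slack you attribute to $|P_1-P_0|<L$ is global; what it actually yields is that no curve in $\A_{\Gamma,L}$ is straight, hence the minimizer satisfies $\kappa\not\equiv0$, and this is precisely the input the paper uses. Its proof chooses $\eta\in C^\infty_c(I;\R^n)$ with $\tfrac{d}{d\varepsilon}\L[\gamma+\varepsilon\eta]=-\int_I\langle\kappa,\eta\rangle\,ds\neq0$, so that $\varepsilon\mapsto\L[\gamma_\varepsilon]$ is strictly monotone on a small interval; it then grafts the boundary data $\Gamma_j$ near the endpoints (Remark \ref{rem:deformation}) in a way whose length change is independent of $\varepsilon$, as in \eqref{eq:length_difference_minconti}, because $\eta$ is supported away from the endpoints; finally it solves $\L[\gamma_{\varepsilon_j,j}]=L_j$ exactly by the intermediate value theorem with $\varepsilon_j\to0$, giving $\B[\gamma_{\varepsilon_j,j}]\to\B[\gamma]$. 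To repair your sketch you would either have to import this interior length-tuning family, or else prove that $\kappa$ does not vanish identically on your endpoint arcs (true by analyticity of non-straight elasticae, but an extra step you do not take) and still build in an intermediate-value argument; as written, both the exact-length matching and the length-decrease direction are unsupported.
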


\begin{proof}
    Fix any $(\Gamma,L)\in \hat{\bA}'$ and any sequence $\{(\Gamma_j,L_j)\}_{j=1}^\infty\subset \hat{\bA}$ converging to $(\Gamma,L)$.
    Fix any minimizer $\gamma\in\mathcal{A}_{\Gamma,L}$ of $\B$.
    Then $m(\Gamma,L)=\B[\gamma]$.

    We first prove upper semicontinuity.
    Since $\gamma$ has nonzero curvature by $(\Gamma,L)\not\in\hat{\bA}_s$, and since $\frac{d}{d\varepsilon}\L[\gamma_\varepsilon] = -\int_I \langle \kappa,\eta \rangle ds $ for $\gamma_\varepsilon:=\gamma+\varepsilon\eta$ with $\eta\in C^\infty_c(I;\R^n)$, we can find a suitable $\eta$ such that $\varepsilon\mapsto\L[\gamma_\varepsilon]$ is strictly increasing on $[-\varepsilon_\eta,\varepsilon_\eta]$ for some small $\varepsilon_\eta>0$.
    Since $\eta$ is supported on a compact set $K\subset I$, by suitably deforming $\gamma_\varepsilon$ around the endpoints, namely on $\bar{I}\setminus K$, we can find $\gamma_{\varepsilon,j}\in\mathcal{A}_{\Gamma_j,L_j}$ such that for any $\varepsilon\in[-\varepsilon_\eta,\varepsilon_\eta]$,
    \begin{equation}\label{eq:length_difference_minconti}
        \L[\gamma_{\varepsilon,j}] - \L[\gamma_\varepsilon] = \L[\gamma_{0,j}] - L,
    \end{equation}
    and since $\Gamma_j\to\Gamma$, we can also have $\gamma_{\varepsilon,j}\to\gamma_\varepsilon$ in $C^2$ so that
    \begin{align}
        \lim_{j\to\infty}\L[\gamma_{\varepsilon,j}] &=\L[\gamma_\varepsilon], \label{eq:convegence_minconti}\\ 
        \lim_{j\to\infty}\B[\gamma_{\varepsilon,j}] &=\B[\gamma_\varepsilon]. \label{eq:convegence2_minconti}
    \end{align}
    (See Remark \ref{rem:deformation} for more details.)
    Then, by $L_j\to L$ and \eqref{eq:convegence_minconti} (for $\varepsilon=0$), there is a large $j_\eta$ such that for any $j\geq j_\eta$,
    \[
    a_j:=\max\{|L_j-L|,|\L[\gamma_{0,j}] - L|\} \leq \frac{1}{2}\min\{ \L[\gamma_{\varepsilon_\eta}] - \L[\gamma], \L[\gamma]-\L[\gamma_{-\varepsilon_\eta}] \}.
    \]
    This with \eqref{eq:length_difference_minconti} and the strict monotonicity of $\L[\gamma_\varepsilon]$ implies that for any $j\geq j_\eta$ there exists $\varepsilon_j\in[-\varepsilon_\eta,\varepsilon_\eta]$ such that $\L[\gamma_{\varepsilon_j,j}] = L_j$.
    Hence $\gamma_{\varepsilon_j,j} \in \mathcal{A}_{\Gamma_j,L_j}$.
    Note also that $\varepsilon_j\to0$ since $a_j\to0$.
    Therefore, by \eqref{eq:convegence2_minconti} and $\lim_{\varepsilon\to0}\B[\gamma_\varepsilon]=\B[\gamma]$,
    \[
    \limsup_{j\to\infty} m(\Gamma_j,L_j) \leq \lim_{j\to\infty}\B[\gamma_{\varepsilon_j,j}] = \B[\gamma] = m(\Gamma,L).
    \]
    This is the desired upper semicontinuity.

    Next we prove lower semicontinuity.
    Let $\{\gamma_j\}$ be a sequence of elasticae (minimizers) such that $\B[\gamma_j]=m(\Gamma_j,L_j)$.
    The upper semicontinuity of $m$ particularly implies $\sup_{j}\B[\gamma_j]<\infty$.
    Hence by Proposition \ref{prop:clamped_compactness} any subsequence has a subsequence $\{\gamma_{j'}\}$ smoothly converging to an elastica $\gamma_\infty\in\mathcal{A}_{\Gamma,L}$.
    Therefore,
    \[
    \lim_{j'\to\infty}\B[\gamma_{j'}] = \B[\gamma_\infty] \geq m(\Gamma,L),
    \]
    and hence even for the full sequence,
    \[
    \liminf_{j\to\infty}m(\Gamma_j,L_j) = \liminf_{j\to\infty}\B[\gamma_j]\geq m(\Gamma,L).
    \]
    This combined with upper semicontinuity completes the proof.
\end{proof}

\begin{remark}
    The minimal energy function is not upper semicontinuous on $\hat{\bA}$.
    In fact, for any $(\Gamma,L)\in \hat{\bA}_s=\hat{\bA}\setminus\hat{\bA}'$ we have $m(\Gamma,L)=0$, but there is a sequence $\{(\Gamma_j,L_j)\}\subset \hat{\bA}'$ converging to $(\Gamma,L)$ such that $m(\Gamma_j,L_j)\to\infty$.
    For example, under \eqref{eq:generic_angle} the minimal energy diverges in the limit $L\to|P_1-P_0|$ \cite[Section 6]{Miura20}.
\end{remark}

\begin{remark}\label{rem:deformation}
    Here we briefly give a more precise argument for the deformation of a curve near the endpoints.
    Let $\Gamma_j:=(P_0^j,P_1^j,V_0^j,V_1^j)$.
    Define $\gamma_{\varepsilon,j}$ (in the proof of Lemma \ref{lem:minimal_energy}) to be the constant-speed reparametrization of
    \begin{align*}
        \hat{\gamma}_{\varepsilon,j} &:= (1-\zeta(\tfrac{x}{\delta_j}))\gamma_\varepsilon(x) + \zeta(\tfrac{x}{\delta_j})(P_0^j+xV_0^j+\tfrac{1}{2}x^2\gamma_\varepsilon''(0))\\
        &\qquad + (1-\zeta(\tfrac{1-x}{\delta_j}))\gamma_\varepsilon(x) + \zeta(\tfrac{1-x}{\delta_j})(P_1^j+(x-1)V_1^j+\tfrac{1}{2}(x-1)^2\gamma_\varepsilon''(1))
    \end{align*}
    with a smooth cut-off function $\zeta:\R\to\R$ such that $\zeta|_{(-\infty,0]}\equiv1$ and $\zeta|_{[1,\infty)}\equiv0$ and with a slowly decaying sequence $\delta_j\to+0$ (depending on the rate of $\Gamma_j\to\Gamma$).
    Since $\hat{\gamma}_{\varepsilon,j}=\gamma_\varepsilon$ on $[\delta_j,1-\delta_j]$, the deformation is done outside the fixed compact set $K\subset I$ for large $j$.
    It is sufficient to show that $\hat{\gamma}_{\varepsilon,j}$ converges in $C^2$ to the constant-speed curve $\gamma_\varepsilon$.
    For simplicity we focus on the convergence near the origin, or just assume $P_1^j=P_1$ and $V_1^j=V_1$.
    Also we only compute the most delicate second-order derivatives:
    \begin{align*}
        |\hat{\gamma}_{\varepsilon,j}''(x)-\gamma_\varepsilon''(x)|
        &\leq \tfrac{1}{\delta_j^2}|\zeta''(\tfrac{x}{\delta_j})||\gamma_\varepsilon(x)-(P_0^j+xV_0^j+\tfrac{1}{2}x^2\gamma_\varepsilon''(0))|
        \\
        & \qquad + \tfrac{1}{\delta_j}|\zeta'(\tfrac{x}{\delta_j})||\gamma_\varepsilon'(x)-(V_0^j+x\gamma_\varepsilon''(0))| \\
        & \qquad\qquad + |\zeta(\tfrac{x}{\delta_j})||\gamma_\varepsilon''(x)-\gamma_\varepsilon''(0)|\\
        & \leq \tfrac{1}{\delta_j^2}|\zeta''(\tfrac{x}{\delta_j})|(o(x^2)+|P_0^j-P_0|+x|V_0^j-V_0|)
        \\
        & \qquad + \tfrac{1}{\delta_j}|\zeta'(\tfrac{x}{\delta_j})|(o(x)+|V_0^j-V_0|) + |\zeta(\tfrac{x}{\delta_j})|o(1),
    \end{align*}
    where the little-o notation is about the limit $x\to0$.
    Letting $C:=\|\zeta\|_{C^2}$,
    \begin{align*}
        & \sup_{x\in[0,1]}|\hat{\gamma}_{\varepsilon,j}''(x)-\gamma_\varepsilon''(x)|\\
        & \leq C\left(\frac{o(\delta_j^2)+|P_0^j-P_0|+\delta_j|V_0^j-V_0|}{\delta_j^2}+ \frac{o(\delta_j)+|V_0^j-V_0|}{\delta_j} + o(1)   \right),
    \end{align*}
    so the RHS converges to $0$ if $\delta_j\to0$ while $\delta_j^{-2}|P_0^j-P_0|\to0$ and $\delta_j^{-1}|V_0^j-V_0|\to0$.
\end{remark}

We are now in a position to complete the proof of Theorem \ref{thm:stability_fixed}.

\begin{proof}[Proof of Theorem \ref{thm:stability_fixed}]
    Lemma \ref{lem:minimal_energy} implies that 
    \[
    \lim_{j\to\infty}\B[\gamma_j]= m(\Gamma,L).
    \]
    On the other hand, by Proposition \ref{prop:clamped_compactness}, there is a subsequence $\{\gamma_{j'}\}\subset\{\gamma_j\}$ that smoothly converges to some elastica $\gamma\in\mathcal{A}_{\Gamma,L}$.
    Since
    \[
    \lim_{j'\to\infty}\B[\gamma_j]= \B[\gamma],
    \]
    we have $\B[\gamma]=m(\Gamma,L)$ and hence $\gamma$ is a minimizer of $\B$ in $\mathcal{A}_{\Gamma,L}$.
\end{proof}


\subsection{Length-penalized case}

Finally we prove Theorem \ref{thm:stability_penalized}.
In this part the main subtlety lies in the control of length from below.

As in the fixed-length case, we begin with smooth compactness.
Here we need a new assumption in order to avoid length-vanishing sequences.

\begin{proposition}\label{prop:clamped_compactness_2}
    Let $(\Gamma,\lambda)\in\X'\times(0,\infty)$ and $\{(\Gamma_j,\lambda_j)\}_{j=1}^\infty\subset \X\times(0,\infty)$ be a sequence converging to $(\Gamma,\lambda)$.
    Let $\{\gamma_j\}\subset C^\omega(\bar{I};\R^n)$ be a sequence of $\lambda_j$-elasticae such that $\gamma_j\in\A_{\Gamma_j}$ for $j\geq1$ and $\sup_j\E_{\lambda_j}[\gamma_j]<\infty$.
    Then there is a subsequence of $\{\gamma_j\}$ smoothly converging to a $\lambda$-elastica in $\A_{\Gamma}$.
\end{proposition}

\begin{proof}
    By $\sup_{j}\E_{\lambda_j}[\gamma_j]<\infty$ and $\lambda_j\to\lambda$ we deduce that $\sup_j\B[\gamma_j]<\infty$ and $\sup_{j}\L[\gamma_j]<\infty$.
    Now we prove $\inf_{j}\L[\gamma_j]>0$.
    If $|P_1-P_0|>0$, then clearly $\L[\gamma_j]\geq|P_1-P_0|>0$, so we suppose that $|P_1-P_0|=0$.
    Then by $\Gamma\not\in \X_c$ we have $V_0\neq V_1$, and hence by the Cauchy--Schwarz inequality,
    \[
     \sqrt{\L[\gamma_j]\B[\gamma_j]} \geq \int_I|\kappa_j|ds \geq \left|\int_I\kappa_jds\right| = |\partial_s\gamma_j(1)-\partial_s\gamma_j(0)| \to |V_1-V_0|>0.
    \]
    Since $\sup_{j}\B[\gamma_j]<\infty$, in this case we also have $\inf_{j}\L[\gamma_j]>0$.

    Therefore, $\{\gamma_j\}$ satisfies \eqref{eq:main_assumption}.
    By Theorem \ref{thm:main_compactness} there is a subsequence converging to some elastica $\gamma_\infty$ in the $C^1$ topology.
    Then $\gamma_\infty\in\A_\Gamma$, which is not a segment since $\Gamma\not\in\X_s$, so by Theorem \ref{thm:main_compactness} the convergence is smooth.
    Letting $j\to\infty$ in equation \eqref{eq:EL_elastica} for $\gamma_j$ implies that $\gamma_\infty$ has the multiplier $\lambda$.
\end{proof}

\begin{remark}\label{rem:discontinuous_length_penalized}
    The assumption $\Gamma\not\in \X_c$ is also unremovable.
    Let $(\Gamma,\lambda)\in \X_c\times(0,\infty)$.
    If we define $\Gamma_j=(P_0^j,P_1^j,V_0^j,V_1^j):=(P_0,P_0+\frac{1}{j}e,e,e)$ for some $e\in\S^{n-1}$, then $(\Gamma_j,\lambda)\to(\Gamma,\lambda)$, but a unique minimizer of $\E_\lambda$ in $\A_{\Gamma_j}$ is a segment with vanishing length as $j\to\infty$.
    Although now $\Gamma_j\not\in\X'$, we can even perturb $V_0^j,V_1^j$ so that $\Gamma_j\in\X'$ and $\A_{\Gamma_j}$ admits a (small) circular arc of radius $1/\sqrt{\lambda}$, which is also a minimizer with vanishing length, as in Figure \ref{fig:discontinuous_circle}.
\end{remark}



    
    

Before discussing continuity of the minimal energy, we need to ensure existence of minimizers.
This follows by a standard direct method.

\begin{theorem}\label{thm:existence_penalized}
    Let $(\Gamma,\lambda)\in\X\times(0,\infty)$.
    Then there is a minimizer of $\E_\lambda$ in $\A_\Gamma$.
\end{theorem}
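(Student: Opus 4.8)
The plan is to run the direct method, extracting convergence from the compactness already established and then verifying lower semicontinuity of $\E_\lambda$. Fix $(\Gamma,\lambda)\in\X\times(0,\infty)$ and set $e:=\inf_{\A_\Gamma}\E_\lambda$, which is finite since $\A_\Gamma\neq\emptyset$. Choose a minimizing sequence $\{\gamma_j\}\subset\A_\Gamma$ with $\E_\lambda[\gamma_j]\to e$. Since $\lambda>0$, positivity of both terms immediately gives, for large $j$, the a priori bounds
\[
\B[\gamma_j]\leq\E_\lambda[\gamma_j]\leq C,\qquad \L[\gamma_j]\leq\tfrac{1}{\lambda}\E_\lambda[\gamma_j]\leq C,
\]
which supply the upper bounds in \eqref{eq:main_assumption}.

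The first substantial step is the lower bound $\inf_j\L[\gamma_j]>0$, which I would obtain by the same case distinction as in the proof of Proposition \ref{prop:clamped_compactness_2}, now completed to cover all of $\X$. If $P_0\neq P_1$, then trivially $\L[\gamma_j]\geq|P_1-P_0|>0$. If $P_0=P_1$ but $V_0\neq V_1$, then the Cauchy--Schwarz estimate $\sqrt{\L[\gamma_j]\B[\gamma_j]}\geq|\partial_s\gamma_j(1)-\partial_s\gamma_j(0)|\to|V_1-V_0|>0$, together with $\sup_j\B[\gamma_j]<\infty$, gives the bound. The remaining case $\Gamma\in\X_c$ (that is, $P_0=P_1$ and $V_0=V_1$), which is explicitly excluded from Proposition \ref{prop:clamped_compactness_2}, is where the argument genuinely differs: here each $\gamma_j$ is a closed $C^1$ immersed curve with $L^2$ curvature, so Fenchel's theorem applies and yields $\int_I|\kappa_j|\,ds\geq2\pi$; combined with Cauchy--Schwarz this gives $\L[\gamma_j]\geq 4\pi^2/\B[\gamma_j]\geq 4\pi^2/C>0$. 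Thus in every case $\{\gamma_j\}$ satisfies \eqref{eq:main_assumption}.

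I would then invoke the weak compactness of Lemma \ref{lem:weak_compactness} (applicable since $\gamma_j(0)=P_0$ is bounded and the $\gamma_j$ have constant speed), extracting a subsequence converging to some constant-speed $\gamma_\infty\in W^{2,2}(I;\R^n)$ of positive length, in the $W^{2,2}$-weak and $C^1$ topologies. The $C^1$ convergence passes the clamped data to the limit, so $\gamma_\infty\in\A_{\Gamma,\L[\gamma_\infty]}\subset\A_\Gamma$. For lower semicontinuity, $C^1$ convergence gives $\L[\gamma_j]\to\L[\gamma_\infty]>0$, while writing $\B[\gamma]=\L[\gamma]^{-3}\|\partial_x^2\gamma\|_{L^2}^2$ for constant-speed curves and combining weak $L^2$-lower semicontinuity of the norm with the convergent positive prefactor $\L[\gamma_j]^{-3}\to\L[\gamma_\infty]^{-3}$ yields $\B[\gamma_\infty]\leq\liminf_j\B[\gamma_j]$. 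Hence $\E_\lambda[\gamma_\infty]\leq\liminf_j\E_\lambda[\gamma_j]=e$, and since $\gamma_\infty\in\A_\Gamma$ this forces $\E_\lambda[\gamma_\infty]=e$, so $\gamma_\infty$ is the desired minimizer.

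I expect the main obstacle to be precisely this lower bound on the length: unlike the fixed-length problem, $\L$ is now free, so a minimizing sequence could a priori collapse, and the degenerate closed-curve case $\Gamma\in\X_c$ is not covered by Proposition \ref{prop:clamped_compactness_2} and must instead be controlled by the global topological input of Fenchel's theorem. Everything else---the upper bounds, the weak-compactness extraction, and the lower semicontinuity of $\B$---is routine once \eqref{eq:main_assumption} is in force.
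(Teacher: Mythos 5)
Your proof is correct, and it takes a genuinely different route from the paper — in fact, precisely the route the paper deliberately sidesteps, remarking before the theorem that existence ``follows by a standard direct method'' but choosing instead to reuse Proposition \ref{prop:clamped_compactness_2} ``in order not to repeat compactness procedures.'' The paper's proof splits into three cases: for $\Gamma\in\X_s$ the straight segment is a trivial minimizer; for $\Gamma\in\X'\setminus\X_c$ it applies Proposition \ref{prop:clamped_compactness_2} with $(\Gamma_j,\lambda_j)\equiv(\Gamma,\lambda)$ to a minimizing sequence, so that the subsequence converges \emph{smoothly} and no lower-semicontinuity discussion is needed; and for $\Gamma\in\X_c$ it proves the sharp bound $\E_\lambda[\gamma]\geq 2\sqrt{\lambda\L[\gamma]\B[\gamma]}\geq 2\sqrt{\lambda}\int_I|\kappa|\,ds\geq 4\pi\sqrt{\lambda}$ (AM--GM, Cauchy--Schwarz, Fenchel), attained exactly by the round circle of radius $1/\sqrt{\lambda}$. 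You instead run one uniform direct-method argument on all of $\X$: upper bounds from positivity, the anti-collapse length bound via your three-case analysis, weak compactness from Lemma \ref{lem:weak_compactness}, and weak lower semicontinuity of $\B$ via the constant-speed identity $\B[\gamma]=\L[\gamma]^{-3}\|\partial_x^2\gamma\|_{L^2}^2$ — all sound (your ``$\to|V_1-V_0|$'' is actually an exact equality here, since $\Gamma$ is fixed along the minimizing sequence, and your observation that for fixed $\Gamma\in\X_c$ every competitor is closed, so Fenchel applies even though Proposition \ref{prop:clamped_compactness_2} must exclude $\X_c$ for \emph{varying} data, is exactly the right distinction). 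It is worth noting what each approach buys. Both proofs invoke Fenchel on $\X_c$, but for different purposes: you use it as a quantitative length lower bound $\L[\gamma_j]\geq 4\pi^2/\B[\gamma_j]$ feeding into compactness, while the paper uses it to identify the minimizer explicitly — a fact it then reuses in the proof of Theorem \ref{thm:stability_penalized} to rule out $\Gamma\in\X_c$ by non-uniqueness. Your route is more self-contained and, strictly speaking, cleaner in one respect: Proposition \ref{prop:clamped_compactness_2} is stated for sequences of analytic $\lambda_j$-elasticae, whereas a minimizing sequence in $\A_\Gamma$ is merely $W^{2,2}$, so the paper's application implicitly requires first replacing the minimizing sequence by one consisting of elasticae (e.g.\ fixed-length minimizers); your direct method avoids this subtlety entirely, at the modest cost of redoing compactness and semicontinuity by hand.
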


\begin{proof}    
    It is clear that $\A_\Gamma\neq\emptyset$ and $\E_\lambda$ is nonnegative.
    Let $\{\gamma_j\}\subset\A_\Gamma$ be a minimizing sequence of $\E_\lambda$, that is, $\E_\lambda[\gamma_j]\to \inf_{\A_\Gamma}\E_\lambda\geq0$.
    In particular, we have $\sup_j\B[\gamma_j]<\infty$ and $\sup_j\L[\gamma_j]<\infty$.

    We also have the non-degeneracy of the length
    \[\inf_j\L[\gamma_j]>0.\]
    Indeed, if $\Gamma\not\in\X_c$, then the non-degeneracy follows by the same reasoning as in the proof of Proposition \ref{prop:clamped_compactness_2}.
    In the case where $\Gamma \in \X_c$, each $\gamma_j \in \A_\Gamma$ can be regarded as a closed $W^{2,2}$-curve. 
    Therefore, by applying the Cauchy--Schwarz inequality and Fenchel's theorem, we obtain
    $
    \L[\gamma_j]\B[\gamma_j] \geq (\int_{\gamma_j}|\kappa|ds )^2 \geq 4\pi^2.
    $
    Since $\sup_j \B[\gamma_j] < \infty$, this also implies the desired non-degeneracy.
    
    Then by Lemma \ref{lem:weak_compactness} we deduce that, after passing to a subsequence with constant-speed reparametrizations (which we do not relabel), the minimizing sequence converges to some $\gamma_\infty\in W^{2,2}(I;\R^n)$ in the $W^{2,2}$-weak and $C^1$ sense.
    This implies that $\gamma_\infty \in \A_\Gamma$ and $\L[\gamma_j]\to\L[\gamma_\infty]$.
    Furthermore, by weak lower semicontinuity, we obtain
    \[
    \liminf_{j\to\infty}\B[\gamma_j] = \liminf_{j\to\infty}\frac{1}{\L[\gamma_j]^3}\|\partial_x^2\gamma_j\|_2^2 \geq \frac{1}{\L[\gamma_\infty]^3}\|\partial_x^2\gamma_\infty\|_2^2  = \B[\gamma_\infty].
    \]
    Thus $\gamma_\infty \in \A_\Gamma$ satisfies $\inf_{\A_\Gamma} \E_\lambda = \lim_{j \to \infty} \E_\lambda[\gamma_j] \geq \E_\lambda[\gamma_\infty]$, indicating that $\gamma_\infty$ is the desired minimizer.
\end{proof}

Now we proceed with continuity of the minimal energy.
Here we again need to do away with $\X_c$ to avoid discontinuity as in Remark \ref{rem:discontinuous_length_penalized} and Figure \ref{fig:discontinuous_circle}.

\begin{lemma}\label{lem:minimal_energy_2}
    The minimal energy function $\tilde{m}:\X\times(0,\infty)\to(0,\infty)$ defined by
    \[
    \tilde{m}(\Gamma,\lambda):=\min_{\gamma\in\mathcal{A}_{\Gamma}} \E_\lambda[\gamma]
    \]
    is upper semicontinuous at any $(\Gamma,\lambda)\in(\X'\cup\X_c)\times(0,\infty)$.
    In addition, $\tilde{m}$ is continuous at any $(\Gamma,\lambda)\in \X'\times(0,\infty)$.
\end{lemma}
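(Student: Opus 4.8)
The plan is to follow the same two-part scheme used for the fixed-length case in the proof of Lemma~\ref{lem:minimal_energy}: first establish upper semicontinuity by exhibiting a near-optimal competitor sequence, and then upgrade to continuity by feeding this bound into the smooth compactness of Proposition~\ref{prop:clamped_compactness_2}. A pleasant feature of the length-penalized setting is that the upper-bound step actually becomes simpler than before, because $\A_{\Gamma_j}$ contains curves of \emph{every} length and so no length-matching constraint needs to be enforced.

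For upper semicontinuity, I would fix $(\Gamma,\lambda)\in\X'\times(0,\infty)$ together with a minimizer $\gamma$ of $\E_\lambda$ in $\A_\Gamma$, which exists by Theorem~\ref{thm:existence_penalized}; thus $\tilde{m}(\Gamma,\lambda)=\E_\lambda[\gamma]$. Given any sequence $(\Gamma_j,\lambda_j)\to(\Gamma,\lambda)$ in $\X\times(0,\infty)$, apply the endpoint deformation of Remark~\ref{rem:deformation} with $\varepsilon=0$ to produce curves $\gamma_{0,j}\in\A_{\Gamma_j}$ with $\gamma_{0,j}\to\gamma$ in $C^2$. Unlike the fixed-length case, I can stop at this step: no length-adjusting variation $\eta$ (and hence no appeal to nonzero curvature) is needed, precisely because $\A_{\Gamma_j}$ imposes no length constraint. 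From $C^2$-convergence of regular curves one obtains $\B[\gamma_{0,j}]\to\B[\gamma]$ and $\L[\gamma_{0,j}]\to\L[\gamma]$, and together with $\lambda_j\to\lambda$ this gives $\E_{\lambda_j}[\gamma_{0,j}]\to\E_\lambda[\gamma]$. Consequently
\[
\limsup_{j\to\infty}\tilde{m}(\Gamma_j,\lambda_j)\leq\lim_{j\to\infty}\E_{\lambda_j}[\gamma_{0,j}]=\E_\lambda[\gamma]=\tilde{m}(\Gamma,\lambda),
\]
which is the claimed upper semicontinuity on $\X'\times(0,\infty)$.

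For continuity at $(\Gamma,\lambda)\in(\X'\setminus\X_c)\times(0,\infty)$ it remains to prove lower semicontinuity. I would take minimizers $\gamma_j$ of $\E_{\lambda_j}$ in $\A_{\Gamma_j}$, so that $\E_{\lambda_j}[\gamma_j]=\tilde{m}(\Gamma_j,\lambda_j)$, and invoke the upper semicontinuity just obtained to get $\sup_j\E_{\lambda_j}[\gamma_j]<\infty$. Since $\Gamma\notin\X_c$, Proposition~\ref{prop:clamped_compactness_2} now applies: passing to a subsequence realizing $\liminf_j\tilde{m}(\Gamma_j,\lambda_j)$ and then to a further subsequence $\{\gamma_{j'}\}$, I obtain smooth convergence to a $\lambda$-elastica $\gamma_\infty\in\A_\Gamma$. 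Then $\E_{\lambda_{j'}}[\gamma_{j'}]\to\E_\lambda[\gamma_\infty]\geq\tilde{m}(\Gamma,\lambda)$, whence $\liminf_j\tilde{m}(\Gamma_j,\lambda_j)\geq\tilde{m}(\Gamma,\lambda)$, and combined with upper semicontinuity this yields continuity.

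I expect the genuinely delicate ingredient to be the uniform lower bound on length underlying the compactness step, rather than any estimate appearing explicitly in the argument above. All the quantitative work is absorbed into Proposition~\ref{prop:clamped_compactness_2}, whose hypothesis $\Gamma\notin\X_c$ is exactly what prevents the minimizers from collapsing to a zero-length limit; Remark~\ref{rem:discontinuous_length_penalized} shows this exclusion is sharp, since continuity genuinely fails on $\X_c$ through length-vanishing minimizers. The main point is therefore conceptual: to recognize that lower semicontinuity reduces entirely to that length-control result, after which the argument parallels Lemma~\ref{lem:minimal_energy}.
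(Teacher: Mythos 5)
Your proposal is correct and takes essentially the same route as the paper: the paper's proof is exactly a reduction to the deformation argument of Lemma \ref{lem:minimal_energy} for upper semicontinuity (noting, as you do, that it simplifies because no length constraint must be matched, so the length-adjusting variation $\eta$ is unnecessary) and to Proposition \ref{prop:clamped_compactness_2} for lower semicontinuity, with the hypothesis $\Gamma\not\in\X_c$ entering only there. Your write-up is a faithful expansion of the paper's (deliberately terse) argument, including the correct diagnosis of where the real work lies.
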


\begin{proof}
    The proof is almost parallel to that of Lemma \ref{lem:minimal_energy}.
    Using existence of minimizers (Theorem \ref{thm:existence_penalized}), we can prove upper semicontinuity at each $(\Gamma,\lambda)\in\X'\times(0,\infty)$ by the same deformation argument; it is in fact much easier as we need not prescribe the length.
    Lower semicontinuity also follows by the same compactness argument, for which we use Proposition \ref{prop:clamped_compactness_2} (instead of Proposition \ref{prop:clamped_compactness}) and this is why we need to assume $\Gamma\not\in\X_c$.
\end{proof}

\begin{proof}[Proof of Theorem \ref{thm:stability_penalized}]
    The assertion follows by a completely parallel argument to the proof of Theorem \ref{thm:stability_fixed}, where we replace Proposition \ref{prop:clamped_compactness} with Proposition \ref{prop:clamped_compactness_2} and Lemma \ref{lem:minimal_energy} with Lemma \ref{lem:minimal_energy_2}.
\end{proof}

\bibliography{elastica}

@article{SS14,
	author = {Sachkov, Yu. L. and Sachkova, E. F.},
	date-added = {2022-09-20 19:15:25 +0900},
	date-modified = {2022-09-20 19:15:34 +0900},
	doi = {10.1007/s10883-014-9211-1},
	fjournal = {Journal of Dynamical and Control Systems},
	issn = {1079-2724},
	journal = {J. Dyn. Control Syst.},
	mrclass = {49J15 (74B20 74K10 93B27)},
	mrnumber = {3255084},
	mrreviewer = {Andrey V. Sarychev},
	number = {4},
	pages = {443--464},
	title = {Exponential mapping in {E}uler's elastic problem},
	url = {https://doi.org/10.1007/s10883-014-9211-1},
	volume = {20},
	year = {2014},
	bdsk-url-1 = {https://doi.org/10.1007/s10883-014-9211-1}}

@article{Tru83,
	author = {Truesdell, C.},
	date-added = {2022-04-09 17:42:13 +0900},
	date-modified = {2022-04-09 17:42:18 +0900},
	doi = {10.1090/S0273-0979-1983-15187-X},
	fjournal = {American Mathematical Society. Bulletin. New Series},
	issn = {0273-0979},
	journal = {Bull. Amer. Math. Soc. (N.S.)},
	mrclass = {01A05 (49-03 73-03 73Cxx)},
	mrnumber = {714991},
	mrreviewer = {John M. Ball},
	number = {3},
	pages = {293--310},
	title = {The influence of elasticity on analysis: the classic heritage},
	url = {https://doi.org/10.1090/S0273-0979-1983-15187-X},
	volume = {9},
	year = {1983},
	bdsk-url-1 = {https://doi.org/10.1090/S0273-0979-1983-15187-X}}

@incollection{Singer2008,
	author = {Singer, David A.},
	booktitle = {Curvature and variational modeling in physics and biophysics},
	date-added = {2022-04-09 17:38:07 +0900},
	date-modified = {2022-04-09 17:38:10 +0900},
	doi = {10.1063/1.2918095},
	mrclass = {58E50 (37J35 49J05 53C20 74G65 74K10)},
	mrnumber = {2483890},
	mrreviewer = {Anders Linn\'{e}r},
	pages = {3--32},
	publisher = {Amer. Inst. Phys., Melville, NY},
	series = {AIP Conf. Proc.},
	title = {Lectures on elastic curves and rods},
	url = {https://doi.org/10.1063/1.2918095},
	volume = {1002},
	year = {2008},
	bdsk-url-1 = {https://doi.org/10.1063/1.2918095}}

@article{Lev,
	author = {Levien, Raph},
	date-added = {2022-04-09 17:17:28 +0900},
	date-modified = {2022-04-10 13:29:04 +0900},
	journal = {Technical Report No. UCB/EECS-2008-10, University of California, Berkeley},
	title = {The elastica: a mathematical history},
	year = {2008}}

@article{Miura20,
	author = {Miura, Tatsuya},
	date-added = {2022-04-09 16:53:55 +0900},
	date-modified = {2022-04-09 16:54:57 +0900},
	doi = {10.1007/s00208-019-01821-8},
	fjournal = {Mathematische Annalen},
	issn = {0025-5831},
	journal = {Math. Ann.},
	mrclass = {49Q10 (53A04)},
	mrnumber = {4081125},
	mrreviewer = {Paolo Piccione},
	number = {3-4},
	pages = {1629--1674},
	title = {Elastic curves and phase transitions},
	url = {https://doi.org/10.1007/s00208-019-01821-8},
	volume = {376},
	year = {2020},
	bdsk-url-1 = {https://doi.org/10.1007/s00208-019-01821-8}}

@article {LangerSinger1984JLMS,
    AUTHOR = {Langer, Joel and Singer, David A.},
     TITLE = {Knotted elastic curves in {${\bf R}^3$}},
   JOURNAL = {J. London Math. Soc. (2)},
  FJOURNAL = {Journal of the London Mathematical Society. Second Series},
    VOLUME = {30},
      YEAR = {1984},
    NUMBER = {3},
     PAGES = {512--520},
      ISSN = {0024-6107,1469-7750},
   MRCLASS = {53A04 (49F99)},
  MRNUMBER = {810960},
MRREVIEWER = {R.\ Osserman},
       DOI = {10.1112/jlms/s2-30.3.512},
       URL = {https://doi.org/10.1112/jlms/s2-30.3.512},
}

@article{miura2024uniqueness,
  title={Uniqueness and minimality of {E}uler's elastica with monotone curvature},
  author={Miura, Tatsuya and Wheeler, Glen},
  journal={arXiv:2402.12771, to appear in J. Eur. Math. Soc. (JEMS)}
}

@article {MY_2024_Crelle,
    AUTHOR = {Miura, Tatsuya and Yoshizawa, Kensuke},
     TITLE = {General rigidity principles for stable and minimal elastic
              curves},
   JOURNAL = {J. Reine Angew. Math.},
  FJOURNAL = {Journal f\"ur die Reine und Angewandte Mathematik. [Crelle's
              Journal]},
    VOLUME = {810},
      YEAR = {2024},
     PAGES = {253--281},
      ISSN = {0075-4102,1435-5345},
   MRCLASS = {49K05 (49Q20)},
  MRNUMBER = {4739246},
       DOI = {10.1515/crelle-2024-0018},
       URL = {https://doi.org/10.1515/crelle-2024-0018},
}

@article{Miura2024survey,
  title={Elastic curves and self-intersections},
  author={Miura, Tatsuya},
  journal={arXiv:2408.03020, to appear in 2024 MATRIX Annals}
}

@article {DallAcquaPozzi2014,
    AUTHOR = {Dall'Acqua, Anna and Pozzi, Paola},
     TITLE = {A {W}illmore-{H}elfrich {$L^2$}-flow of curves with natural
              boundary conditions},
   JOURNAL = {Comm. Anal. Geom.},
  FJOURNAL = {Communications in Analysis and Geometry},
    VOLUME = {22},
      YEAR = {2014},
    NUMBER = {4},
     PAGES = {617--669},
      ISSN = {1019-8385,1944-9992},
   MRCLASS = {53C44 (53A04)},
  MRNUMBER = {3263933},
MRREVIEWER = {Liang\ Zhao},
       DOI = {10.4310/CAG.2014.v22.n4.a2},
       URL = {https://doi.org/10.4310/CAG.2014.v22.n4.a2},
}

@article {DziukKuwertSchatzle2002,
    AUTHOR = {Dziuk, Gerhard and Kuwert, Ernst and Sch\"atzle, Reiner},
     TITLE = {Evolution of elastic curves in {$\Bbb R^n$}: existence and
              computation},
   JOURNAL = {SIAM J. Math. Anal.},
  FJOURNAL = {SIAM Journal on Mathematical Analysis},
    VOLUME = {33},
      YEAR = {2002},
    NUMBER = {5},
     PAGES = {1228--1245},
      ISSN = {0036-1410,1095-7154},
   MRCLASS = {53C44 (35K55 65M60)},
  MRNUMBER = {1897710},
MRREVIEWER = {Anders\ Linn\'er},
       DOI = {10.1137/S0036141001383709},
       URL = {https://doi.org/10.1137/S0036141001383709},
}

@article {DallAcquaDeckelnick2024,
    AUTHOR = {Dall'Acqua, Anna and Deckelnick, Klaus},
     TITLE = {Elastic graphs with clamped boundary and length constraints},
   JOURNAL = {Ann. Mat. Pura Appl. (4)},
  FJOURNAL = {Annali di Matematica Pura ed Applicata. Series IV},
    VOLUME = {203},
      YEAR = {2024},
    NUMBER = {3},
     PAGES = {1137--1158},
      ISSN = {0373-3114,1618-1891},
   MRCLASS = {49J05 (34B15 49J45 53A04 53C21)},
  MRNUMBER = {4754274},
MRREVIEWER = {Graziano\ Crasta},
       DOI = {10.1007/s10231-023-01396-x},
       URL = {https://doi.org/10.1007/s10231-023-01396-x},
}

\end{document}